\theoremstyle{plain}
\newtheorem{thm}{Theorem}[section]
\newtheorem*{mainthm}{Main Theorem}
\newtheorem{lem}[thm]{Lemma}
\newtheorem{prop}[thm]{Proposition}
\theoremstyle{definition}
\newtheorem{defn}[thm]{Definition}
\newtheorem{remark}[thm]{Remark}
\newcommand{\diam}{\operatorname{diam}}
\newcommand{\dist}{\operatorname{dist}}
\newcommand{\Jac}{\operatorname{Jac}}
\numberwithin{equation}{section}
\newcommand{\thmref}[1]{Theorem~\ref{#1}}
\newcommand{\lemref}[1]{Lemma~\ref{#1}}
\newcommand{\figref}[1]{Figure~\ref{#1}}
\newcommand{\cR}{{\mathcal R}}
\newcommand{\cD}{{\mathcal D}}
\newcommand{\bfR}{{\mathbf R}}
\newcommand{\bbN}{{\mathbb N}}
\newcommand{\bbZ}{{\mathbb Z}}
\newcommand{\bbQ}{{\mathbb Q}}
\newcommand{\bbR}{{\mathbb R}}
\newcommand{\bbC}{{\mathbb C}}
\newcommand{\bbD}{{\mathbb D}}
\newcommand{\bfx}{{\mathbf x}}
\newcommand{\bfq}{{\mathbf q}}
\newcommand{\bfp}{{\mathbf p}}
\title[Renormalization in the Golden-Mean Semi-Siegel H\'enon Family]{Renormalization in the Golden-Mean Semi-Siegel H\'enon Family: Non-Quasisymmetry}
\author{Jonguk Yang}
\begin{document}

\begin{abstract}
For quadratic polynomials of one complex variable, the boundary of the golden-mean Siegel disk must be a quasicircle. We show that the analogous statement is not true for quadratic H\'enon maps of two complex variables.
\end{abstract}

\maketitle

\section{Introduction}

Let $\theta \in (\bbR \setminus \bbQ )/\bbZ$ be an irrational rotation number. Then $\theta$ can be represented by an infinite continued fraction:
$$
\theta = [a_0, a_1, \ldots{}] = \cfrac{1}{a_0+\cfrac{1}{a_1+\cfrac{1}{a_2 + \ldots{}}}}.
$$
The {\it $n$th partial convergent of $\theta$} is the rational number
$$
\frac{p_n}{q_n} = [a_0, a_1, \ldots , a_n].
$$
The denominator $q_n$ is called the $n$th {\it closest return moment}. The sequence $\{q_n\}_{n=0}^\infty$ satisfy the following inductive relation:
$$
q_0 = 1
\hspace{5mm} , \hspace{5mm}
q_1 =a_0
\hspace{5mm} \text{and} \hspace{5mm}
q_{n+1} = a_n q_n + q_{n-1}
\hspace{5mm} \text{for} \hspace{5mm}
n \geq 1.
$$
We say that $\theta$ is {\it Diophantine of order $d$} if there exists $C >0$ such that
$$
q_{n+1}< C q_n^{d-1}
\hspace{5mm} \text{for all} \hspace{5mm}
n \in \mathbb{N}
$$
If $\theta$ is Diophantine of order $2$, then $\theta$ is said to be of {\it bounded type}. It is easy to show that $\theta$ is of bounded type if and only if $a_n$'s are uniformly bounded (see e.g. \cite{M}). The simplest example of a bounded type rotation number is the inverse-golden mean:
$$
\theta_* = \frac{\sqrt{5}-1}{2} = [1, 1, \ldots].
$$
Observe that for $\theta_*$, the sequence of closest return moments $\{q_n\}_{n=0}^\infty$ is the {\it Fibonacci sequence}.

Consider the standard one-parameter family of quadratic polynomials
$$
f_c(z) := z^2 +c \hspace{5mm} \text{for } c \in \bbC.
$$
This is referred to as the {\it quadratic family}. A quadratic polynomial $f_c$ is determined uniquely by the multiplier $\mu \in \bbC$ at a fixed point $x_c \in \bbC$ for $f_c$. In fact, we have
\begin{equation}\label{eq:1dmulti}
c=\frac{\mu}{2}-\frac{\mu^2}{4}.
\end{equation}
Let $c_0$ be the unique parameter such that for $f_{c_0}$, the fixed point $x_{c_0}$ is irrationally indifferent with rotation number $\theta$ (i.e. $\mu = e^{2\pi i \theta}$). If $\theta$ is equal to the inverse golden-mean $\theta_*$, then we denote $f_{c_0}$ as simply $f_*$. 

The quadratic polynomial $f_{c_0}$ is said to be {\it Siegel} if it is locally linearizable at $x_0$. More precisely, $f_{c_0}$ is Siegel if there exist neighborhoods $U$ of $0$ and $V$ of $x_0$, and a biholomorphic change of coordinates
$$
\psi : (U, 0) \to (V, x_0)
$$
such that 
$$
\psi^{-1} \circ f_{c_0} \circ \psi(z) = \mu z.
$$
A classic theorem of Siegel states, in particular, that $f_{c_0}$ is Siegel whenever $\theta$ is Diophantine. Moreover, it is known that the linearizing map $\psi$ can be biholomorphically extended to a map $\psi: (\mathbb{D}, 0) \rightarrow (\Delta, x_0)$ so that its image $\Delta := \psi(\mathbb{D})$ is maximal (see e.g. \cite{M}). We call $\Delta$ and $\partial \Delta$ the {\it Siegel disk} and the {\it Siegel boundary} of $f_{c_0}$ respectively. See \figref{fig:siegeldisk}.

It is natural to ask whether $\partial \Delta$ is a Jordan curve, and if so, whether it is quasisymmetric, or even smooth. The following theorem settles these questions in the case when $\theta$ is of bounded type (see \cite{He}):

\begin{thm}[Douady, Ghys, Herman, Shishikura]\label{siegel boundary rotation}
Suppose that $\theta$ is of bounded type. Then $f_{c_0}$ has its critical point $0$ on its Siegel boundary $\partial \Delta$, and the restriction $f_{c_0}|_{\partial \Delta} : \partial \Delta \to \partial \Delta$ is quasisymmetrically conjugate to the rigid rotation of the unit circle $\partial \mathbb{D}$ by $\theta$.
\end{thm}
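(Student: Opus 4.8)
The plan is to realize $f_{c_0}$, together with a linearizing chart for its Siegel disk, by a quasiconformal surgery on a Blaschke model, following Douady, Ghys and Herman. I would start from the family of degree-three rational maps
$$
B_t(z)\;=\;e^{2\pi i t}\,z^{2}\,\frac{z-3}{1-3z},\qquad t\in\bbR/\bbZ .
$$
Each $B_t$ preserves the unit circle $\partial\bbD$, is holomorphic on $\overline{\bbC}\setminus\overline{\bbD}$ (its only pole, at $1/3$, lies in $\bbD$), fixes $\infty$ as a superattracting point of local degree $2$, and has a single critical point on $\partial\bbD$, at $z=1$, of local degree $3$; hence $B_t|_{\partial\bbD}$ is a real-analytic circle homeomorphism with one cubic critical point. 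Its rotation number $\rho(t):=\rho\big(B_t|_{\partial\bbD}\big)$ is continuous, monotone and surjective in $t$, so I can fix $t_\theta$ with $\rho(t_\theta)=\theta$ and put $B:=B_{t_\theta}$.

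The one genuinely hard ingredient — and the precise point where the hypothesis that $\theta$ be of bounded type is used — is the rigidity theorem for critical circle maps. Because $\theta$ is of bounded type, the real \emph{a priori} bounds of Herman and \'{S}wi\k{a}tek hold along the orbit of the critical point of $B|_{\partial\bbD}$, and therefore (Herman; see \cite{He}) there is a quasisymmetric homeomorphism $h:\partial\bbD\to\partial\bbD$ conjugating $B|_{\partial\bbD}$ to the rigid rotation $R_\theta(w)=e^{2\pi i\theta}w$, i.e.\ $h\circ B|_{\partial\bbD}=R_\theta\circ h$. For rotation numbers outside the bounded-type class this step is known to fail, which is why the theorem must be stated under that restriction; everything downstream of it is soft.

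Next I would carry out the surgery. Extend $h$ to a quasiconformal homeomorphism $H$ of $\overline{\bbD}$ (Douady--Earle or Beurling--Ahlfors), and set
$$
F(z)\;=\;
\begin{cases}
B(z), & z\in\overline{\bbC}\setminus\bbD,\\
H^{-1}\circ R_\theta\circ H(z), & z\in\bbD .
\end{cases}
$$
The two branches agree on the real-analytic curve $\partial\bbD$ — exactly because $h$ is a conjugacy — so $F$ is a well-defined quasiregular self-map of $\overline{\bbC}$ of degree $2$, holomorphic off $\bbD$. Let $\mu_0$ be the standard complex structure; put $\mu=H^{*}\mu_0$ on $\bbD$, pull this structure back along the exterior branches of the iterates of $B$ to spread it over the grand orbit of $\bbD$, and let $\mu=\mu_0$ elsewhere. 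Then $F^{*}\mu=\mu$, and since the only map ever iterated here is the holomorphic map $B$, the dilatation satisfies $\|\mu\|_\infty<1$ (equal to that of $H$). By the measurable Riemann mapping theorem there is a quasiconformal $\phi:\overline{\bbC}\to\overline{\bbC}$ with Beltrami coefficient $\mu$, and $g:=\phi\circ F\circ\phi^{-1}$ is holomorphic.

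Finally I would identify $g$ and read off the conclusion. As a degree-$2$ holomorphic self-map of $\overline{\bbC}$ whose point $\phi(\infty)$ is superattracting of local degree $2$ with trivial fibre, $g$ is affinely conjugate to a quadratic polynomial. The surgery annihilates the interior critical point $0$ of $B$, while the cubic critical point $z=1$ of $B$ on $\partial\bbD$ survives, now of local degree $2$ (its interior half-neighbourhood maps homeomorphically), as a simple critical point of $g$ on $\partial\phi(\bbD)$. Moreover $\phi\circ H^{-1}$ is a biholomorphism $\bbD\to\phi(\bbD)$ conjugating $R_\theta$ to $g|_{\phi(\bbD)}$, so $\phi(\bbD)$ is a Siegel disk of $g$ of rotation number $\theta$, carrying an indifferent fixed point of multiplier $e^{2\pi i\theta}$; by \eqref{eq:1dmulti} this pins $g$ down to be affinely conjugate to $f_{c_0}$, with $\phi(\bbD)=\Delta$ and $\phi(1)=0$ under that conjugacy. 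Consequently $\partial\Delta=\phi(\partial\bbD)$ is the image of a round circle under a global quasiconformal homeomorphism, hence a quasicircle; the critical point $0$ lies on $\partial\Delta$; and $\phi|_{\partial\bbD}$ is a quasisymmetric conjugacy from $B|_{\partial\bbD}$ to $f_{c_0}|_{\partial\Delta}$, so $\phi|_{\partial\bbD}\circ h^{-1}$ is a quasisymmetric conjugacy from $R_\theta$ to $f_{c_0}|_{\partial\Delta}$ — which is the assertion. The whole weight of the argument sits in the circle-map rigidity theorem; the surgery and the degree bookkeeping are routine.
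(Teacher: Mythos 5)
The paper does not prove this statement at all: Theorem~\ref{siegel boundary rotation} is quoted as a classical result of Douady, Ghys, Herman and Shishikura, with a pointer to \cite{He}, and nothing in the paper depends on how it is proved. So the only meaningful comparison is with the standard argument in the literature, and your proposal is exactly that argument: the Douady--Ghys surgery on the degree-three Blaschke product $e^{2\pi i t}z^2(z-3)/(1-3z)$, with the Herman--Swiatek quasisymmetric linearization of bounded-type critical circle maps isolated (correctly) as the one deep input where the bounded-type hypothesis enters, followed by the measurable Riemann mapping theorem and the identification of the straightened map as a quadratic polynomial via the multiplier formula \eqref{eq:1dmulti}. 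The degree and critical-point bookkeeping you give (degree $2$ for the surgered map, annihilation of the interior critical point $0$ of $B$, local degree $2$ at $z=1$ after surgery, totally invariant superattracting point at $\infty$) is right, and the conclusion that $\phi|_{\partial\bbD}\circ h^{-1}$ is a quasisymmetric conjugacy from the rotation to $f_{c_0}|_{\partial\Delta}$, with $\partial\Delta$ a quasicircle through the critical point, is the assertion of the theorem.

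One step you assert rather than argue is the maximality $\phi(\bbD)=\Delta$: a priori the surgery only shows that $\phi(\bbD)$ is contained in the linearization domain. The standard way to close this is to note that $g$ is injective on its Siegel disk (being conformally conjugate there to a rotation), so the Siegel disk cannot contain the critical point; since the critical point $\phi(1)$ lies on $\partial\phi(\bbD)$, the inclusion $\phi(\bbD)\subset\Delta$ cannot be strict, for otherwise $\partial\phi(\bbD)\subset\Delta$ would put the critical point inside $\Delta$. With that one line added, your write-up is a complete outline of the classical proof; the remaining ingredients you invoke (monotonicity and surjectivity of the rotation number in $t$, the Douady--Earle extension, invariance and boundedness of the spread-out Beltrami coefficient) are standard and used correctly.
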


\begin{figure}[h]
\centering
\includegraphics[scale=0.25]{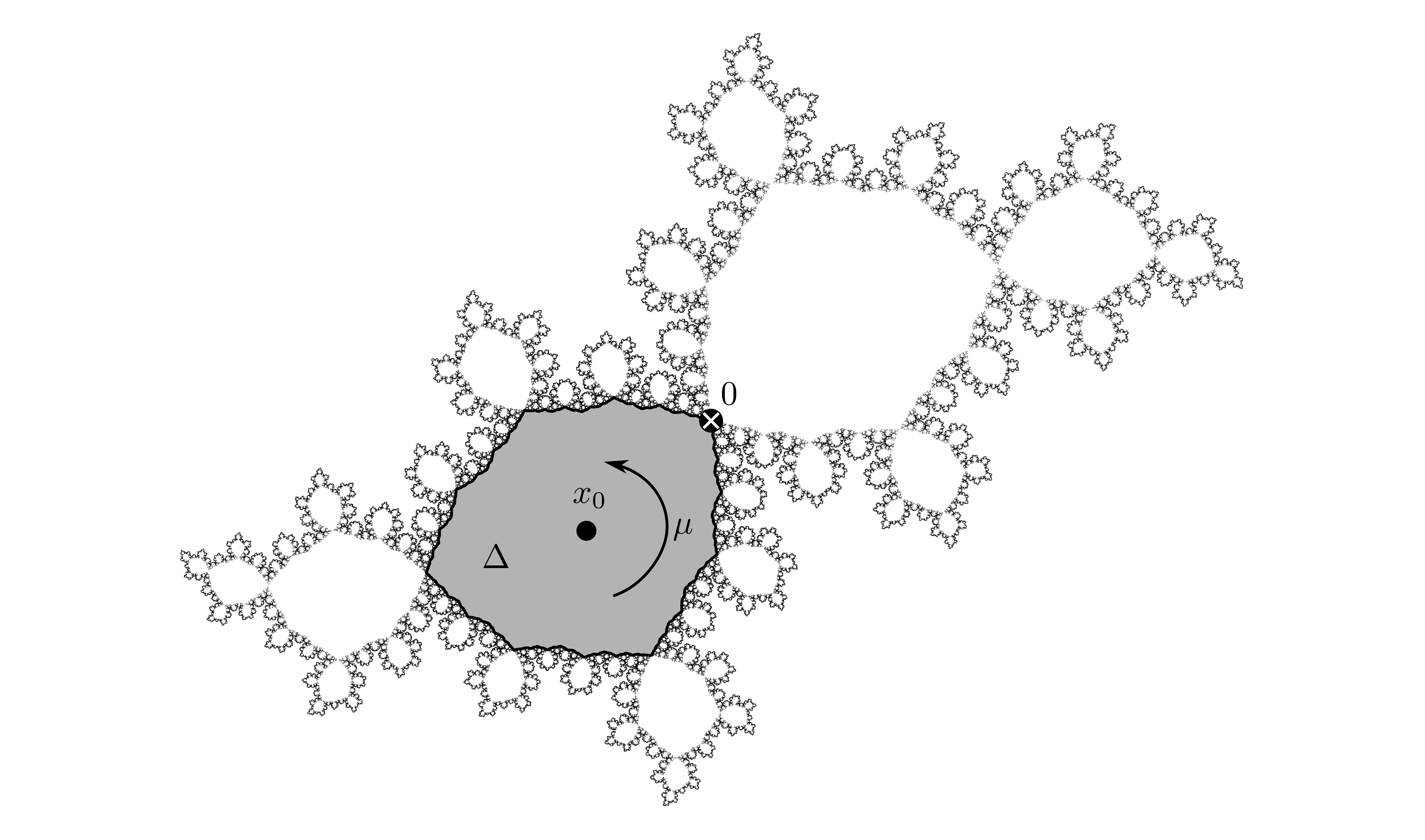}
\caption{The Siegel disk $\Delta$ of the golden-mean Siegel quadratic polynomial $f_*$. The critical point $0$ is on $\partial \Delta$, and the restriction $f_*|_{\partial \Delta} : \partial \Delta \to \partial \Delta$ is quasisymmetrically conjugate to the rigid rotation of the unit circle $\partial \mathbb{D}$ by the angle $\theta_* = (\sqrt{5}-1)/2$.}
\label{fig:siegeldisk}
\end{figure}

From Theorem \ref{siegel boundary rotation}, it immediately follows that $\partial \Delta$ cannot be smooth if $\theta$ is of bounded type, since any curve containing the critical point cannot be both invariant and smooth. It is important to note, however, that there are examples of quadratic Siegel polynomials for which the Siegel boundary does not contain the critical point and is in fact smooth (see \cite{BuCh}).

The main goal of this paper is to carry the study of Siegel boundaries to a higher dimensional setting. To this end, consider the following two-dimensional extension of the quadratic family:
$$
H_{c,b}(x,y):=(x^2+c - by,x) \hspace{5mm} \text{for } c \in \bbC \text{ and } b \in \bbC \setminus \{0\}.
$$
This is referred to as the {\it (complex quadratic) H\'enon family}.

It is easy to see that $H_{c,b}$ has constant Jacobian:
$$
\text{Jac} \, H_{c,b} \equiv b.
$$
Moreover, for $b=0$, the map $H_{c,b}$ degenerates to the following embedding of $f_c$:
$$
\iota(f_c)(x,y) := (f_c(x),x).
$$
Hence, the parameter $b$ measures how far $H_{c,b}$ is from being a degenerate one-dimensional system. In this paper, we will always assume that $H_{c,b}$ is a dissipative map (i.e. $|b|<1$). 

A H{\'e}non map $H_{c,b}$ is determined uniquely by the multipliers $\mu \in \bbC \setminus \{0\}$ and $\nu \in \bbD \setminus \{0\}$  at a fixed point $\mathbf{x}_{c,b} \in \bbC^2$. In fact, we have
$$
b = \mu\nu,
$$
and
$$
c=(1+\mu\nu)\left(\frac{\mu}{2}+\frac{\nu}{2}\right)-\left(\frac{\mu}{2}+\frac{\nu}{2}\right)^{2}.
$$
Compare with \eqref{eq:1dmulti}. For any Jacobian $b \in \mathbb{D} \setminus \{0\}$, there exists a unique parameter $c_b \in \bbC$ such that one of the multipliers $\mu$ of the fixed point $\mathbf{x}_b := \mathbf{x}_{{c_b}, b}$ for $H_b := H_{c_b, b}$ is given by
$$
\mu=e^{2\pi i \theta}.
$$
Note that in this case, we have $|\nu| = |b|$.

The H\'enon map $H_b$ is said to be {\it semi-Siegel} if it is locally linearizable at $\mathbf{x}_b$. More precisely, $H_b$ is semi-siegel if there exist neighborhoods $\mathbf{U}$ of $(0,0)$ and $\mathbf{V}$ of $\mathbf{x}_0$, and a biholomorphic change of coordinates $\Psi_b : (\mathbf{U}, (0,0)) \to (\mathbf{V}, \mathbf{x}_b)$ such that 
$$
\Psi_b^{-1} \circ H_b \circ \Psi_b(x,y) = (\mu x, \nu y).
$$
Similar to the one-dimensional case, $H_b$ is semi-Siegel whenever $\theta$ is Diophantine. Furthermore, the linearizing map $\Psi_b$ can be biholomorphically extended to a map $\Psi_b : (\mathbb{D} \times \bbC, (0,0)) \rightarrow (\mathcal{C}_b, \mathbf{x}_b)$ so that its image $\mathcal{C}_b := \Psi_b(\mathbb{D} \times \bbC)$ is maximal (see \cite{MoNiTaUe}). We call $\mathcal{C}_b$ the {\it Siegel cylinder} of $H_b$. In the interior of $\mathcal{C}_b$, the dynamics of $H_b$ is conjugate to rotation by $\theta$ in one direction, and compression by $\nu$ in the other direction. Clearly, the orbit of every point in $\mathcal{C}_b$ converges to the analytic disk $\mathcal{D}_b:=\Psi_b(\mathbb{D} \times\{0\})$ at height $0$. We call $\mathcal{D}_b$ the {\it Siegel disk} of $H_b$. See \figref{fig:siegelcylinder}

\begin{figure}[h]
\centering
\includegraphics[scale=0.25]{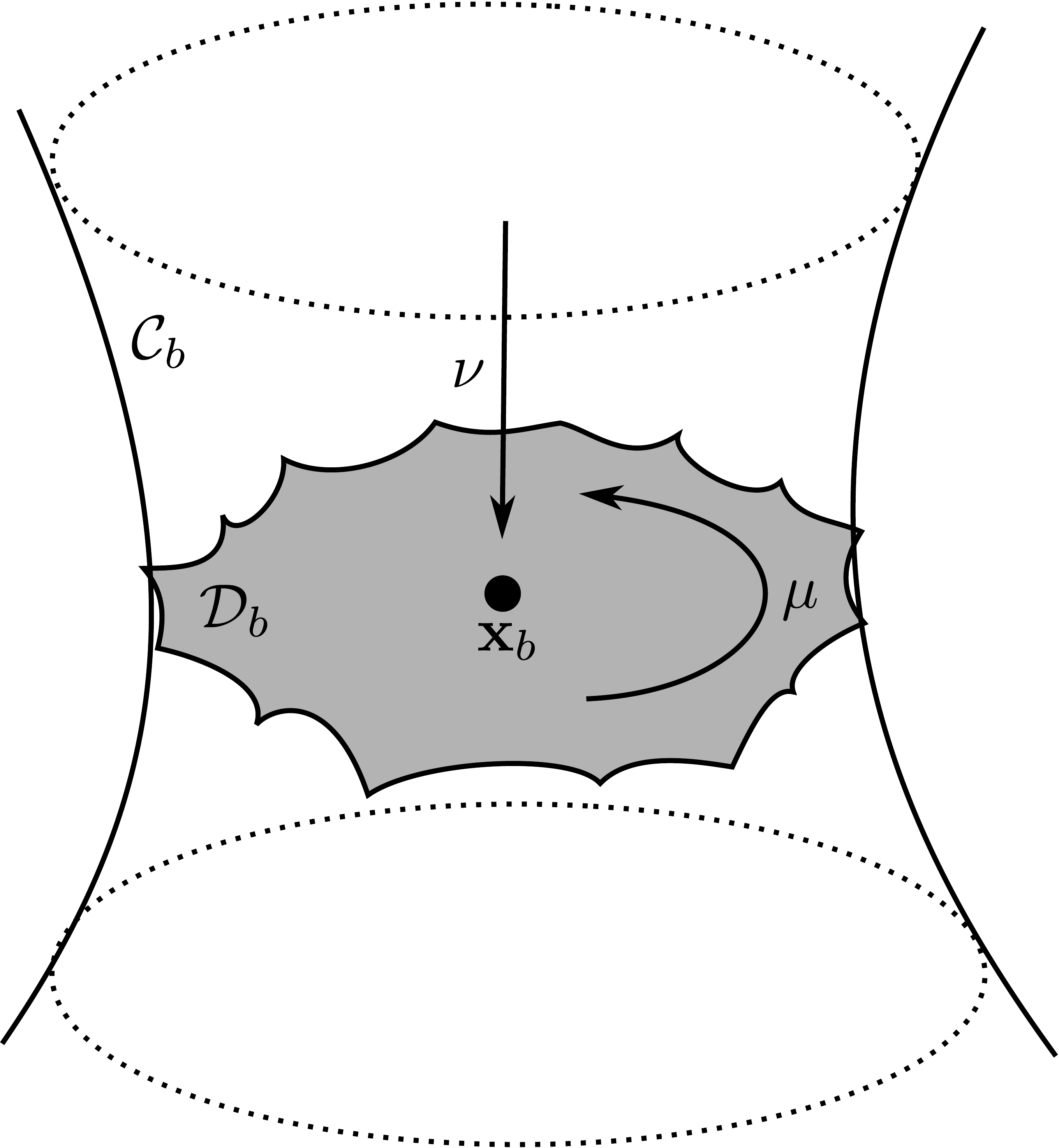}
\caption{The Siegel cylinder $\mathcal{C}_b$ and the Siegel disk $\mathcal{D}_b$ of $H_b$.}
\label{fig:siegelcylinder}
\end{figure}

The study of semi-Siegel H\'enon maps had been a wide open subject until a recent work of Gaidashev, Radu, and Yampolsky (see \cite{GaRaYam}), who proved:

\begin{thm}[Gaidashev-Radu-Yampolsky]\label{continuous circle}
Let $\theta = \theta_* = (\sqrt{5}-1)/2$. Then there exists $\bar \epsilon >0$ such that for $b \in \bbD_{\bar\epsilon} \setminus \{0\}$, the boundary of the Siegel disk $\cD_b$ for $H_b$ is a homeomorphic image of the circle. In fact, the linearizing map
$$
\Psi_b : \bbD \times \{0\} \rightarrow \cD_b
$$
extends continuously and injectively (but not smoothly) to the boundary.
\end{thm}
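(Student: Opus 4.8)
The argument is a renormalization argument driven by the one-dimensional theory, as carried out in \cite{GaRaYam}. Recall Yampolsky's \emph{cylinder renormalization} operator $\cR_{\mathrm{cyl}}$, which acts on a Banach neighborhood of the golden-mean Siegel quadratic $f_*$ inside a space of analytic maps with a Siegel disk of rotation number $\theta_*$: one takes the first-return map of $f$ to a fundamental annular domain straddling the Siegel boundary, glues it into a map of a complex cylinder, and uniformizes. This operator has a \emph{hyperbolic} fixed point $f_{\mathrm{ren}}$, an analytic, non-polynomial map with a Siegel disk $\Delta_{\mathrm{ren}}$ of rotation number $\theta_*$ carrying a critical point on $\partial\Delta_{\mathrm{ren}}$, whose stable manifold $\mathcal{W}^{s}(f_{\mathrm{ren}})$ has codimension one and contains $f_*$; hence $\cR_{\mathrm{cyl}}^{n}(f_*) \to f_{\mathrm{ren}}$ geometrically. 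The geometry of the Siegel boundary is thereby tiled by renormalization: $\partial\Delta_{\mathrm{ren}}$ is a fixed quasicircle with definite moduli, and $\partial\Delta_*$ is a countable union of ``tiles'' indexed by the Fibonacci moments $q_n$, the level-$n$ tiles being images of a fixed arc of $\partial\Delta_{\mathrm{ren}}$ under a composition of inverse renormalization microscopes; their diameters decay geometrically in $n$, and, by the complex a priori bounds underlying \thmref{siegel boundary rotation}, all tiles have uniformly bounded geometry with no pinching between consecutive levels.

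The plan is to transplant this picture into the Hénon family. First one constructs a renormalization operator $\cR$ on a Banach space of \emph{Hénon-like maps}, that is, small perturbations of the degenerate embedding $\iota(f_*)(x,y) = (f_*(x),x)$, modeled on $\cR_{\mathrm{cyl}}$ in the horizontal variable. The structural facts to establish are: (i) $\iota(f_{\mathrm{ren}})$ is a fixed point of $\cR$; (ii) $\cR$ inherits the hyperbolicity of $\cR_{\mathrm{cyl}}$, its stable manifold being of codimension one in the Hénon-like space with all directions transverse to the embedded one-dimensional slice uniformly contracted; and, crucially, (iii) the \emph{average Jacobian} $b_n$ of $\cR^{n}(H_b)$ obeys $|b_n| \asymp |b|^{q_n}$, hence, since $q_n$ grows exponentially, decays super-exponentially in $n$. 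Granting in addition the \emph{a priori bounds} that keep the orbit $\{\cR^{n}(H_b)\}$ in the domain of $\cR$, which is where the smallness threshold $\bar\epsilon$ enters (for $|b|$ small, $H_b$ is a small perturbation of a one-dimensional map and the 1D bounds persist), facts (i)--(iii) force $H_b \in \mathcal{W}^{s}(\iota(f_{\mathrm{ren}}))$ and $\cR^{n}(H_b) \to \iota(f_{\mathrm{ren}})$, the error being controlled both geometrically, by hyperbolicity, and super-exponentially, via $b_n$.

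Given this convergence, $\cD_b$ is reconstructed from its renormalization tower exactly as $\Delta_*$ was: $\partial\cD_b$ is a countable union of level-$n$ tiles, each the image of a fixed arc of the (now two-dimensional, but nearly degenerate) renormalized Siegel disk under a composition of inverse microscopes for $\cR$. Since $\cR^{n}(H_b)$ is $O(|b|^{q_n})$-close to $\iota(f_{\mathrm{ren}})$, whose Siegel boundary is the planar Jordan curve $\partial\Delta_{\mathrm{ren}} \times \{0\}$, the level-$n$ tiles of $\partial\cD_b$ are $O(|b|^{q_n})$-perturbations of the corresponding planar tiles; because the super-exponential decay of $|b_n|$ outpaces the merely geometric shrinking of the tiles, this perturbation is negligible at every scale, so the level-$n$ tiles still decay geometrically and still have uniformly bounded geometry with no pinching. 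Summing these estimates along the tower yields a modulus of continuity for the candidate boundary parametrization $(e^{2\pi i t},0) \mapsto \lim_{r\to 1^-}\Psi_b(re^{2\pi i t},0)$: the limit exists, depends continuously on $t$, conjugates $H_b|_{\partial\cD_b}$ to the rigid rotation by $\theta_*$, and, by the no-pinching bounds, is injective. Thus $\Psi_b$ extends continuously and injectively to $\partial\bbD\times\{0\}$ and $\partial\cD_b$ is a Jordan curve. Non-smoothness is inherited from the fixed point: near the point $p\in\partial\cD_b$ corresponding to the critical point of $f_{\mathrm{ren}}$, the blow-ups of $\partial\cD_b$ converge, with errors $O(|b|^{q_n})$, to the blow-ups of $\partial\Delta_{\mathrm{ren}}$ at its critical point, which are self-similar under the renormalization scaling and not smooth; hence $\partial\cD_b$ is not differentiable at $p$, so $\Psi_b$ is not differentiable at $\Psi_b^{-1}(p)$.

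The main obstacle is the analytic core: items (ii)--(iii) together with the a priori bounds. One must show that the two-dimensional operator $\cR$ is well defined on a full Banach neighborhood of $\iota(f_{\mathrm{ren}})$, is hyperbolic there, and that the renormalizations of $H_b$ never leave this neighborhood. Hyperbolicity does not follow formally from the one-dimensional statement, since $\iota(f_{\mathrm{ren}})$ is not a regular point of the naive return operator; it must be obtained by combining the known spectral data of $D\cR_{\mathrm{cyl}}(f_{\mathrm{ren}})$ with rigorous, in practice computer-assisted, estimates on the newly created strongly contracting directions, after which the a priori bounds are bootstrapped from $|b| < \bar\epsilon$ and the persistence of the 1D bounds. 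Once this engine runs, the tiling reconstruction of $\partial\cD_b$ and the transfer of continuity, injectivity, and non-smoothness from the fixed point are comparatively routine consequences of the $O(|b|^{q_n})$ closeness.
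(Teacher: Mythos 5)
The central gap is in your reconstruction step: you assert that, because $\cR^{n}(H_b)$ is $O(|b|^{q_n})$-close to the degenerate one-dimensional fixed point, the level-$n$ tiles of $\partial\cD_b$ ``still have uniformly bounded geometry with no pinching,'' the two-dimensional perturbation being ``negligible at every scale'' since super-exponential decay outpaces geometric shrinking. That claim is false, and in fact it contradicts the Main Theorem of this very paper: for a dense $G_\delta$ set of $b\in\bbD_{\bar\epsilon}\setminus\{0\}$ the boundary has unbounded geometry, i.e.\ it is not $K$-quasisymmetric for any $K$. The reason the ``super-exponential beats geometric'' heuristic fails is the anisotropy of the renormalization microscopes: by \thmref{cap derivative}, a depth-$k$ microscope contracts horizontally like $\lambda_*^{2k}$ but vertically only like $\lambda_*^{k}$, so a vertical deviation of size $b^{q_{2n+1}}$ in the $n$th frame is \emph{not} negligible relative to the horizontal scale exactly when $|b|^{q_{2n+1}}\asymp|\lambda_*|^{k}$; this resonance is precisely what Section 4 exploits to produce cusps. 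Consequently, any proof of \thmref{continuous circle} that routes continuity and injectivity through uniform bounded geometry of the tiles cannot work, even though the conclusion of the theorem is true.

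Beyond that, your route also differs from the actual one in ways worth noting. The proof in \cite{GaRaYam} uses the almost-commuting-pair renormalization of \cite{GaYam} (the scheme recalled in Section 2), not cylinder renormalization; hyperbolicity is established (computer-assistedly) only for the one-dimensional operator, and convergence of the two-dimensional renormalizations to the one-dimensional fixed point comes from the super-exponential decay of the $y$-dependence (\lemref{degeneration}) rather than from a separately proven two-dimensional hyperbolicity with stable manifold as you describe. Continuity and injectivity of the boundary extension then require only that the dynamically defined pieces $X_n\cup Y_n$ covering $\partial\cD_b$ shrink geometrically in diameter (\propref{2dcap}, \thmref{limit curve}) and are arranged with the combinatorics of the rotation by $\theta_*$; no uniform relative geometry is needed, and none holds generically. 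Finally, your last step derives non-differentiability of the curve $\partial\cD_b$ itself as a ``routine consequence''; that is the substantially harder \thmref{nonsmooth circle}, proved separately in \cite{YamY}, whereas the parenthetical ``not smoothly'' in \thmref{continuous circle} refers only to the failure of smoothness of the boundary extension of $\Psi_b$.
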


Recall that $H_b$ is an automorphism of $\bbC^2$ with constant Jacobian $b \neq 0$. Hence, $H_b$ does not have any definite singularities that would obstruct the smoothness of $\partial \cD_b$ like in the one-dimensional case. Nonetheless, in the author's joint paper with Yampolsky (see \cite{YamY}), we proved that the Siegel boundary for a H\'enon map with golden-mean rotation number is not smooth:

\begin{thm}[Yampolsky-Y.]\label{nonsmooth circle}
Let $\theta = \theta_* = (\sqrt{5}-1)/2$. Then there exists $\bar \epsilon >0$ such that for $b \in \bbD_{\bar\epsilon} \setminus \{0\}$, the boundary of the Siegel disk $\cD_b$ for $H_b$ is not $C^1$-smooth.
\end{thm}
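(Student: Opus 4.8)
\medskip

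The plan is to run a renormalization argument built on the fact --- established in the cylinder-renormalization framework of Gaidashev--Radu--Yampolsky --- that the degenerate map $\iota(f_*)$ is a hyperbolic fixed point of the golden-mean H\'enon renormalization operator $\bfR$, whose local stable manifold contains the analytic curve $b\mapsto H_b$. Thus $\bfR^n H_b\to\iota(f_*)$ exponentially fast, while the Jacobian is raised to Fibonacci powers, $\Jac(\bfR^n H_b)=b^{q_n}$, so the ``two-dimensional part'' of $\bfR^n H_b$ is super-exponentially small. The strategy is to use a renormalization microscope to compare the infinitesimal geometry of $\partial\cD_b$ near a distinguished point with that of the one-dimensional Siegel boundary $\partial\Delta_*$ of $f_*$ near its critical point $0$, and to derive a contradiction from the hypothesis that $\partial\cD_b$ is $C^1$.

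First I would set up renormalization of the Siegel disk. By \thmref{continuous circle}, $\partial\cD_b$ is a Jordan curve and $\Psi_b$ extends continuously to the boundary of $\bbD\times\{0\}$; the analogue for $b=0$ follows from \thmref{siegel boundary rotation}, and a direct computation gives $\cD_{\iota(f_*)}=\{(f_*(w),w):w\in\Delta_*\}$, so that near the ``critical value'' tip $\bfv_0:=(c_0,0)$ the curve $\partial\cD_{\iota(f_*)}$ is, up to projecting away the quadratically small coordinate, exactly $\partial\Delta_*$ near $0$. Let $\bfv_b\in\partial\cD_b$ be the point matching $\bfv_0$ under the renormalization change of coordinates. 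I would then show that $\bfR$ carries the data $(H_b,\cD_b,\bfv_b)$ to $(\bfR H_b,\cD_{\bfR H_b},\bfv_{\bfR H_b})$: there are affine rescalings $\Lambda_n$ --- the linear parts of $\bfR^n$, whose ``slow'' eigenvalue is the one-dimensional renormalization scaling $\sigma^n$ --- such that $\Lambda_n^{-1}$ sends a fixed-size arc of $\partial\cD_b$ about $\bfv_b$ onto a definite arc of $\partial\cD_{\bfR^n H_b}$ about $\bfv_{\bfR^n H_b}$, with an error of order $\Jac(\bfR^n H_b)=b^{q_n}$. Combined with $\bfR^n H_b\to\iota(f_*)$ and continuity (in the map) of the boundary-extended linearizers, this yields that the blow-ups $\Lambda_n^{-1}(\partial\cD_b-\bfv_b)$ converge, on compact sets in the Hausdorff sense, to the corresponding blow-up of $\partial\cD_{\iota(f_*)}$ at $\bfv_0$ --- i.e.\ to the graph embedding of the self-similar limit $\gamma_*$ of $\partial\Delta_*$ at $0$.

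The contradiction then comes from rigidity of this limit. By McMullen's asymptotic self-similarity of the golden-mean Siegel disk, the blow-ups of $\partial\Delta_*$ at $0$ converge to a curve $\gamma_*$ invariant under multiplication by the complex renormalization scaling $\sigma$, and $\sigma\notin\bbR$; hence $\gamma_*$ is not a straight line through the origin (a $C^1$ point would force the real blow-ups to converge to the tangent line, whereas a non-real scaling forces them to spiral). Now suppose $\partial\cD_b$ is $C^1$. Then it is $C^1$ at $\bfv_b$, so every sequence of affine blow-ups of $\partial\cD_b$ at $\bfv_b$ by linear maps of norm tending to infinity converges to the tangent line there; applying this to the renormalization blow-ups $\Lambda_n^{-1}$ --- whose extreme anisotropy is controlled precisely because $\Jac(\bfR^n H_b)=b^{q_n}$ decays super-exponentially against the merely exponential slow scaling $\sigma^n$ --- the limit must be a line. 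But by the previous paragraph the limit is $\gamma_*$, which is not a line. This contradiction shows $\partial\cD_b$ is not $C^1$ for every $b\in\bbD_{\bar\epsilon}\setminus\{0\}$, with $\bar\epsilon$ small enough for the renormalization theory to apply. (This is strictly stronger than the non-smoothness of the parametrization $\Psi_b$ already recorded in \thmref{continuous circle}, since it excludes even a non-smooth $C^1$ parametrization of a smooth curve.)

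The main obstacle is the second step --- making ``renormalization of the Siegel disk'' precise and quantitative. This requires: (i) a priori bounds along the whole renormalization orbit of the \emph{family} $\{H_b\}$, not only at the fixed point, so that $\bfR^n H_b$ is defined and uniformly close to $\iota(f_*)$ on domains large enough to contain the relevant arc of $\partial\cD_b$; (ii) uniform convergence of the boundary-extended linearizers $\Psi_{\bfR^n H_b}$ to $\Psi_{\iota(f_*)}$ near the critical point, combining the $b=0$ continuity of \thmref{siegel boundary rotation} with the $b\neq 0$ continuity of \thmref{continuous circle}; and (iii) control of the rescalings $\Lambda_n$, which stretch the dynamical and the transverse directions by wildly different amounts, so that ``$C^1$ at $\bfv_b$'' genuinely forces the anisotropically rescaled curves to flatten onto a line rather than to preserve a parabola-like profile. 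Reconciling the super-exponential anisotropy of $\Lambda_n$, driven by $b^{q_n}\to 0$, with this flattening dichotomy is the technical heart of the proof, and is what pins down the admissible size of $\bar\epsilon$.
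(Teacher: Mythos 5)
First, note that the paper does not prove \thmref{nonsmooth circle}: it is quoted from \cite{YamY}, and only the machinery behind it (Sections 2--4) appears here, so I am comparing your proposal with the method of \cite{YamY} as reflected in that machinery. Your overall philosophy --- a renormalization microscope at the distinguished point and a comparison with the non-smooth one-dimensional limit --- is indeed the right one, and your step (a) (convergence of the rescaled boundary pieces to the embedded invariant arc of the limiting pair $\zeta_*$) is essentially what \cite{GaRaYam} provide. The genuine gap is your pivot: the assertion that if $\partial\cD_b$ is $C^1$ at $\bfv_b$ then \emph{every} sequence of affine blow-ups by linear maps of norm tending to infinity converges to the tangent line is false. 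The microscope derivatives are strongly anisotropic: by \thmref{cap derivative} they contract the $x$-direction like $\lambda_*^{2n}$ and the $y$-direction like $\lambda_*^{n}$ (plus a shear), and anisotropic blow-ups of a smooth curve need not converge to its tangent line (the curve $y=x^2$ is invariant under $(x,y)\mapsto(x/\delta,y/\delta^2)$, and with a stronger vertical expansion the limit is a vertical line, not the tangent). A soft compactness argument does yield a straight-line limit when the putative tangent at the cap has a nonzero $x$-component (then the admissible parameter range forces the rescaled $y$-deviation to be $O(|\lambda_*|^{n})$), but it says nothing when the tangent is vertical with respect to this splitting --- and that degenerate case is exactly the one that must be excluded, because for $b\neq 0$ the universal asymptotics show the secant directions at the cap become asymptotically \emph{vertical} at fine scales: at level $n$ and depth $k$ one has $\Delta x\asymp\lambda_*^{2k}$ while $\Delta y$ contains the term $\lambda_*^{k-1}b^{q_{2n+1}}C_2$, which dominates $\lambda_*^{2k}$ as $k\to\infty$ (this is precisely the competition displayed in \eqref{eq:deltax}--\eqref{eq:deltay}). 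You flag this reconciliation as ``the technical heart'' in your final paragraph, but you do not supply it, so the contradiction is not actually reached.

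This missing step is where the actual proof lives: \cite{YamY} runs through the quantitative universality statements --- $B_n(x,y)=\bigl(\xi_n(x)-b^{q_{2n}}\beta(x)y(1+O(\rho^n)),\,x\bigr)$ as in \thmref{universalityb}, together with \thmref{universalityaphi} and \thmref{cap derivative} --- and plays the exponentially small one-dimensional term $\lambda_*^{k}C_1$ against the super-exponentially small but $k$-independent two-dimensional term $b^{q_{2n+1}}C_2$; the same mechanism is exploited (for special parameters) in the proof of the Main Theorem in this paper. A purely soft ``stable manifold plus Hausdorff convergence'' argument never sees the $b^{q_{2n}}$ term and therefore cannot rule out the vertical-tangent case. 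Two further inaccuracies are minor but worth fixing: the Jacobians of the renormalized pair are $b^{q_{2n}}$ and $b^{q_{2n+1}}$, not $b^{q_n}$, and the anisotropy of the rescalings is $\lambda_*^{2n}$ versus $\lambda_*^{n}$ --- it is not ``driven by $b^{q_n}$''; the Jacobian instead controls the deviation of the curve from the one-dimensional graph. Also, the one-dimensional non-smoothness you need is at the critical value (the cap has the combinatorial address of $\xi_*(0)=1$), not at the critical point, though this transfers easily.
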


The properties of the Siegel boundary for H\'enon maps given in \thmref{continuous circle} and \thmref{nonsmooth circle} are also true for quadratic polynomials. Our main result states that the similarity between the one and two-dimensional case does not extend to quasisymmetry:

\begin{mainthm}[Non-Quasisymmetry]
Let $\theta = \theta_* = (\sqrt{5}-1)/2$. Then there exists $\bar\epsilon >0$ such that the set of parameter values $b$ for which the boundary of the Siegel disk $\cD_b$ for $H_b$ has unbounded geometry contains a dense $G_\delta$ subset in the disc $\mathbb{D}_{\bar\epsilon} \setminus \{0\}$.
\end{mainthm}

The proof of the Main Theorem follows the strategy used by de Carvalho, Lyubich and Martens in \cite{dCLMa} to obtain the analogous result for the limit Cantor sets of Feigenbaum H\'enon maps.


\section{Preliminaries}\label{preliminaries}

In this section, we provide a brief summary of the renormalization theory of semi-Siegel H\'enon maps. See \cite{Y} for complete details.

Let $b \in \bbD_{\bar \epsilon} \setminus \{0\}$ for some $\bar\epsilon>0$ sufficiently small. Consider the H\'enon map
$$
H_b(x,y)=\begin{bmatrix}
x^2+c_b - by\\
x
\end{bmatrix}
$$
that has a Siegel disc $\cD_b \subset \bbC^2$ with rotation number $\theta_* = (\sqrt{5}-1)/2$.

\subsection{Definition of Renormalization}

Let $\hat \Omega_0$ and $\hat \Gamma_0$ be suitably chosen topological bidisks in $\bbC^2$ such that $\hat \Omega_0 \cap \hat \Gamma_0 \ni (0,0)$ and $\hat \Omega_0 \cup \hat \Gamma_0 \supset \partial \cD_b$. The {\it pair representation} of $H_b$ is given by
$$
\hat \Sigma_0 = (\hat A_0, \hat B_0) := (H_b|_{\hat\Omega_0}, H_b|_{\hat\Gamma_0}).
$$

Let
\begin{equation}\label{eq:initial normalization}
\Phi_0(x,y) := (\lambda_0 x, \lambda_0 y),
\end{equation}
where $\lambda_0 := c_b$. Observe that
$$
\Phi_0^{-1} \circ H_b \circ \Phi_0(0,0) =(1, 0).
$$
The {\it normalized pair representation} of $H_b$ is defined as
$$
\Sigma_0 = (A_0, B_0) := (\Phi_0^{-1} \circ \hat A_0 \circ \Phi_0, \Phi_0^{-1} \circ \hat B_0 \circ \Phi_0).
$$
We may assume that for some topological discs $Z, W, V \subset \bbC$ containing $0$, the domains of $A_0$ and $B_0$ are given by
$$
\Omega = Z \times V := \Phi_0^{-1}(\hat \Omega_0)
\hspace{5 mm} \text{and} \hspace{5mm}
\Gamma = W \times V := \Phi_0^{-1}(\hat \Gamma_0)
\hspace{5 mm} \text{respectively.}
$$

The $n$th {\it renormalization} of $H_b$:
$$
\bfR^n(H_b) := \Sigma_n = (A_n, B_n),
$$
where
$$
A_n(x,y) = \begin{bmatrix}
a_n(x,y) \\
h_n(x,y)
\end{bmatrix}
\hspace{5mm} \text{and} \hspace{5mm}
B_n(x,y) = \begin{bmatrix}
b_n(x,y) \\
x
\end{bmatrix},
$$
is the pair of rescaled iterates of $\Sigma_0$ defined inductively as follows. Denote
$$
(a_n)_y(x) := a_n(x,y),
$$
and let
$$
H_{n+1}(x,y) :=
\begin{bmatrix}
(a_n)_y^{-1}(x) \\
y
\end{bmatrix}.
$$
Consider the non-linear change of coordinates defined as
$$
\Phi_{n+1} := H_{n+1} \circ \Lambda_{n+1},
$$
where
\begin{equation}\label{eq:affine rescaling}
\Lambda_{n+1}(x,y) = (\lambda_{n+1} x + c_{n+1}, \lambda_{n+1} y)
\end{equation}
is an affine rescaling map to be specified later. The pair $\Sigma_{n+1} = (A_{n+1}, B_{n+1})$ is defined as
$$
A_{n+1} = \Phi_{n+1}^{-1} \circ B_n \circ A_n^2 \circ \Phi_{n+1}
\hspace{5mm} \text{and} \hspace{5mm}
B_{n+1} = \Phi_{n+1}^{-1} \circ B_n \circ A_n \circ \Phi_{n+1}.
$$

\begin{lem}\label{degeneration}
For $n \geq 0$, let $\Sigma_n = (A_n, B_n)$ be the $n$th renormalization of $H_b$. Then $A_n$ and $B_n$ are bounded analytic maps that are well-defined on $\Omega = Z \times V$ and $\Gamma = W \times V$ respectively. Moreover, the dependence of $\Sigma_n$ on $y$ decays \emph{super-exponentially} fast. That is, we have
$$
\sup_{(x,y) \in \Omega}\|\partial_y A_n(x,y)\| < C\bar\epsilon^{2^n}
\hspace{5mm} \text{and} \hspace{5mm}
\sup_{(x,y) \in \Gamma}\|\partial_y B_n(x,y)\| < C\bar\epsilon^{2^n}.
$$
for some uniform constant $C >0$.
\end{lem}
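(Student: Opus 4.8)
The plan is to argue by induction on $n$, the inductive step resting on a single algebraic identity that forces the $y$-dependence to be (essentially) squared at each renormalization step; well-definedness and boundedness will come from the a priori bounds of the renormalization operator established in \cite{Y}, which I take as given. For the base case $n=0$: since $\Phi_0$ is the linear map $(x,y)\mapsto(\lambda_0 x,\lambda_0 y)$ with $\lambda_0=c_b$, a direct computation gives $A_0(x,y)=(\lambda_0 x^2+1-by,\,x)$, and the same formula for $B_0$; hence $\partial_y A_0\equiv(-b,0)$, so $\|\partial_y A_0\|=|b|<\bar\epsilon$, and likewise for $B_0$. That $A_0,B_0$ are bounded analytic on $\Omega=Z\times V$ and $\Gamma=W\times V$ follows from the choice of the initial bidisks, and that the higher $\Sigma_n$ stay well-defined bounded analytic maps on these same domains — with uniform control of the rescalings $\lambda_n$, of the $x$-derivatives of $a_n,b_n,h_n$, and of a lower bound $|\partial_x a_n|\ge m>0$ on the region where the branch of $(a_n)_y^{-1}$ used in $H_{n+1}$ is taken — is part of the renormalization construction of \cite{Y}.

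\emph{Inductive step.} Put $\delta_n:=\max\bigl(\sup_\Omega\|\partial_y A_n\|,\ \sup_\Gamma\|\partial_y B_n\|\bigr)$ and suppose $\delta_n<C\bar\epsilon^{2^n}$. The key point is that $H_{n+1}^{-1}(x,y)=(a_n(x,y),y)$, while
\[
A_n\circ H_{n+1}(x,y)=\bigl(x,\ \tilde h_n(x,y)\bigr),\qquad \tilde h_n(x,y):=h_n\bigl((a_n)_y^{-1}(x),\,y\bigr),
\]
the first coordinate collapsing to $x$ because $a_n\bigl((a_n)_y^{-1}(x),y\bigr)=x$ identically. Differentiating, $\partial_y\tilde h_n=\partial_y h_n-(\partial_x h_n)(\partial_y a_n)/(\partial_x a_n)=O(\delta_n)$. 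Now feed this into the recursion. For $B_{n+1}$: $B_n\circ A_n\circ H_{n+1}(x,y)=\bigl(b_n(x,\tilde h_n(x,y)),\,x\bigr)$, whose $y$-derivative is $\bigl(O(\delta_n^2),0\bigr)$ — the gain comes from the first argument of $b_n$ being the $y$-free variable $x$, so that all $y$-dependence is routed through $\tilde h_n$; post-composing with $H_{n+1}^{-1}$ and conjugating by the affine $\Lambda_{n+1}$ only multiplies by uniformly bounded factors, giving $\sup_\Gamma\|\partial_y B_{n+1}\|=O(\delta_n^2)$. Likewise $A_n^2\circ H_{n+1}(x,y)=\bigl(a_n(x,\tilde h_n),\,h_n(x,\tilde h_n)\bigr)$ has $y$-derivative of size $O(\delta_n)\cdot O(\delta_n)=O(\delta_n^2)$ in each coordinate, and post-composing with $B_n$, $H_{n+1}^{-1}$, and $\Lambda_{n+1}$ preserves this order, so $\sup_\Omega\|\partial_y A_{n+1}\|=O(\delta_n^2)$. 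Hence $\delta_{n+1}\le K\delta_n^2$ for a uniform $K$, and iterating from $\delta_0<\bar\epsilon$ yields $\delta_n<C\bar\epsilon^{2^n}$ (after absorbing $K$ into the constant / shrinking $\bar\epsilon$). Analyticity of $A_n,B_n$ is automatic, being built from compositions and inverses of analytic maps.

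\emph{Main obstacle.} The estimate above is a soft, self-improving induction once its hypotheses hold; the genuine content lies in those hypotheses — that every composition in the definition of $\Sigma_{n+1}$ is defined on $\Phi_{n+1}(\Omega)$, resp. $\Phi_{n+1}(\Gamma)$, with image inside the domain of the next map, and that the derivatives appearing above, most delicately $\partial_x a_n$ on the inverted branch (which runs near the critical point), stay bounded away from $0$ and $\infty$ uniformly in $n$. These are precisely the compactness/a priori bounds of the renormalization theory of \cite{Y}, which I would invoke rather than reprove here.
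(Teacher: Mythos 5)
The paper itself gives no proof of this lemma --- it is stated in the preliminaries as a summary of results from \cite{Y} --- so there is nothing internal to compare against; judged on its own, your argument is the standard and correct one. The squaring mechanism is exactly right: composing with $H_{n+1}$ collapses the first coordinate to the $y$-free variable $x$, so all $y$-dependence of the next-level maps is routed through $\tilde h_n$, whose $y$-derivative is already $O(\delta_n)$, and the chain rule then yields $\delta_{n+1}\le K\delta_n^2$; you also correctly isolate the genuinely hard input (well-definedness of the compositions and the uniform bound on $\partial_x a_n$ along the inverted branch), which legitimately belongs to the a priori bounds of \cite{Y}. The only cosmetic caveat is that iterating $\delta_{n+1}\le K\delta_n^2$ literally gives $\delta_n\le K^{-1}(K\bar\epsilon)^{2^n}$ rather than $C\bar\epsilon^{2^n}$, so the stated form requires renaming $K\bar\epsilon$ as the base (or starting the induction from a level where $K\delta_{n_0}<\bar\epsilon^2$); this does not affect the super-exponential decay that the lemma is actually asserting.
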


The one-dimensional projections of $A_n$, $B_n$ and $\Sigma_n$ are given by
$$
\eta_n(x) := a_n(x, 0)
\hspace{5mm} , \hspace{5mm}
\xi_n(x) := b_n(x,0)
\hspace{5mm} \text{and} \hspace{5mm}
\zeta_n := (\eta_n, \xi_n)
\hspace{5mm} \text{respectively}.
$$
By \lemref{degeneration}, we see that $\eta_n$ and $\xi_n$ are bounded analytic functions defined on $Z$ and $W$ respectively. Moreover, the dynamics of $\Sigma_n$ degenerates to that of $\zeta_n$ super-exponentially fast. It is shown in \cite{Y} that $\eta_n$ and $\xi_n$ each have a unique simple critical point which are $C\bar\epsilon^{2^n}$-close to each other. We choose the normalizing constants $\lambda_n$ and $c_n$ in \eqref{eq:affine rescaling} so that
$$
\xi_n(0) =1
\hspace{5mm} \text{and} \hspace{5mm}
\xi_n'(0) = 0.
$$

\subsection{Renormalization Convergence}

Let $\zeta_* = (\eta_*, \xi_*)$ be the fixed point of the one-dimensional renormalization operator $\cR$ given in \cite{GaYam}. In particular, we have
\begin{equation}\label{eq:1d limit}
\lambda_*^{-1} \eta_* \circ \xi_* \circ \eta_* (\lambda_* x) = \eta_*(x)
\hspace{5mm} \text{and} \hspace{5mm}
\lambda_*^{-1} \eta_* \circ \xi_* (\lambda_* x)=\xi_*(x),
\end{equation}
where
$$
\lambda_* := \eta_* \circ \xi_* (0) \in \bbD
$$
is the universal scaling factor.

Convergence under renormalization for semi-Siegel H\'enon maps was first obtained in \cite{GaYam}. For the renormalization operator $\bfR$ defined above, the proof of convergence is given in \cite{Y}.

\begin{thm}\label{convergence}
As $n \to \infty$, we have the following convergences (each of which occurs at a geometric rate):
\begin{enumerate}[(i)]
\item $\zeta_n =(\eta_n, \xi_n) \to \zeta_* = (\eta_*, \xi_*)$;
\item $\lambda_n \to \lambda_*$; and
\item $\Phi_n \to \Phi_*$, where 
$$
\Phi_*(x,y) =
\begin{bmatrix}
\phi_*(x) \\
\lambda_* y
\end{bmatrix}
:=
\begin{bmatrix}
\eta_*^{-1}(\lambda_* x) \\
\lambda_* y
\end{bmatrix}.
$$
\end{enumerate}
\end{thm}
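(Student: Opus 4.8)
The plan is to treat $\bfR$ as a super-exponentially thin perturbation of the one-dimensional operator $\cR$ of \cite{GaYam}, and to combine the a priori degeneration estimate of \lemref{degeneration} with the hyperbolicity of $\cR$ at its fixed point $\zeta_*$. \textbf{Step 1 (the one-dimensional shadow).} I would first record the elementary but crucial fact that $\bfR$ descends to $\cR$ on one-dimensional projections, up to an error governed by the $y$-dependence. When a pair $\Sigma = (A,B)$ is independent of $y$, a direct unwinding of the definitions of $H_{n+1}$, $\Lambda_{n+1}$ and of the formulas for $A_{n+1}, B_{n+1}$ shows that the first coordinates of $\bfR(\Sigma)$ at height $0$ are exactly $\cR$ applied to the first coordinates of $\Sigma$ at height $0$, and that the affine data $\lambda, c$ agree with the one-dimensional normalizing constants; compare \eqref{eq:1d limit}. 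For a general $\Sigma_n$ one runs the same composition chain, but now $a_n$, $b_n$ and the inverse branch $(a_n)_y^{-1}$ deviate from their height-$0$ restrictions $\eta_n$, $\xi_n$, $\eta_n^{-1}$ by $O(\|\partial_y\Sigma_n\|) = O(\bar\epsilon^{2^n})$ by \lemref{degeneration}, while all the compositions are uniformly Lipschitz on the fixed domains $\Omega, \Gamma$ by the a priori bounds of \cite{Y}. Hence
$$
\|\zeta_{n+1} - \cR(\zeta_n)\| \;\le\; C\bar\epsilon^{2^n}
$$
on a fixed sub-domain, and likewise $|\lambda_{n+1} - \lambda(\zeta_n)| + |c_{n+1} - c(\zeta_n)| \le C\bar\epsilon^{2^n}$, where $\lambda(\cdot), c(\cdot)$ denote the one-dimensional normalizing constants.

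\textbf{Step 2 (convergence of $\zeta_n$).} By the a priori bounds, $\{\zeta_n\}$ is precompact in the space of analytic pairs on a fixed domain. Any limit point $\zeta_\infty$ satisfies $\cR(\zeta_\infty) = \zeta_\infty$ since the errors in Step 1 vanish; as the pair $\zeta_n$ carries the golden-mean rotation number of $H_b$ at every level and the golden-mean Siegel fixed point of $\cR$ is unique (\cite{GaYam}), we get $\zeta_\infty = \zeta_*$, hence $\zeta_n \to \zeta_*$. To promote this to a geometric rate I invoke the hyperbolicity of $\cR$ at $\zeta_*$ from \cite{GaYam}: restricted to a neighborhood of $\zeta_*$ inside its (codimension-one) stable class, $\cR$ contracts by a factor $\tilde\rho < 1$. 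Once $\zeta_n$ has entered this neighborhood, Step 1 gives $\|\zeta_{n+1} - \zeta_*\| \le \tilde\rho\|\zeta_n - \zeta_*\| + C\bar\epsilon^{2^n}$; since the perturbation decays far faster than any geometric rate — and, the orbit being already asymptotic to $\zeta_*$, cannot feed a persistent component along the weak unstable direction — a standard pseudo-orbit/telescoping argument yields $\|\zeta_n - \zeta_*\| \le C'\tilde\rho^{\,n}$.

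\textbf{Step 3 ($\lambda_n$ and $\Phi_n$).} The normalizing constants $\lambda_n, c_n$ are pinned down by $\xi_n(0) = 1$ and $\xi_n'(0) = 0$, hence depend analytically — so Lipschitz-continuously on bounded sets — on $\zeta_{n-1}$; the corresponding normalization at the fixed point gives $\lambda_* = \eta_*\circ\xi_*(0)$ and $c_* = 0$, consistent with \eqref{eq:1d limit} and with the stated form of $\Phi_*$. Step 2 then forces $\lambda_n \to \lambda_*$ and $c_n \to 0$ geometrically. Finally $\Phi_{n+1} = H_{n+1}\circ\Lambda_{n+1}$ with $H_{n+1}(x,y) = ((a_n)_y^{-1}(x), y)$: since $a_n \to \eta_*$ uniformly on a fixed disk with $\|\partial_y a_n\| = O(\bar\epsilon^{2^n})$ and $\eta_*$ is univalent there, the inverse function theorem with uniform bounds gives $(a_n)_y^{-1} \to \eta_*^{-1}$ uniformly and essentially independently of $y$; composing with $\Lambda_{n+1}(x,y) \to (\lambda_* x, \lambda_* y)$ yields $\Phi_{n+1}(x,y) \to (\eta_*^{-1}(\lambda_* x), \lambda_* y) = \Phi_*(x,y)$, again geometrically.

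\textbf{Main obstacle.} The genuinely two-dimensional content — and the principal difficulty — is everything feeding into Step 1: one must show that throughout the renormalization tower the compositions $B_n\circ A_n^2\circ\Phi_{n+1}$ and $B_n\circ A_n\circ\Phi_{n+1}$ land in $\Omega$ and $\Gamma$ with uniformly bounded geometry, that the non-linear coordinate change $\Phi_{n+1}$ (which inverts the $x$-slice of $a_n$) has uniformly controlled distortion, and that the $y$-dependence genuinely contracts at the super-exponential rate of \lemref{degeneration} under the operator. This is precisely where the estimates of \cite{Y} do the heavy lifting; granting them, Steps 2--3 are soft and reduce the theorem to the one-dimensional hyperbolicity result of \cite{GaYam}, which is invoked as a black box.
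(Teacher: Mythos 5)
The paper itself gives no proof of \thmref{convergence}: it is quoted as a known result, attributed to \cite{GaYam} and, for the operator $\bfR$ as defined here, to \cite{Y}. Your overall architecture --- treat $\bfR$ as a super-exponentially small perturbation of the one-dimensional operator $\cR$ via \lemref{degeneration} (Step 1), and then deduce (ii) and (iii) from (i) by continuity of the normalizing constants and of the inverse branch $(a_n)_y^{-1}$ (Step 3) --- is indeed the philosophy of those cited proofs, and Steps 1 and 3 are essentially sound modulo the estimates of \cite{Y} that you correctly flag as the technical heavy lifting.

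The genuine gap is in Step 2. The fixed point $\zeta_*$ of $\cR$ is hyperbolic with a nontrivial one-dimensional unstable direction (this is exactly what underlies universality in parameter space), so $\cR$ is \emph{not} a contraction on a neighborhood of $\zeta_*$; the inequality $\|\zeta_{n+1}-\zeta_*\|\le\tilde\rho\|\zeta_n-\zeta_*\|+C\bar\epsilon^{2^n}$ holds only after projecting out the unstable component, and the forward pseudo-orbit/telescoping argument fails precisely there: an error of size $\bar\epsilon^{2^m}$ injected at step $m$ is amplified by the unstable multiplier at every subsequent step, so forward summation does not keep the unstable coordinate small, and your parenthetical remark that the orbit ``being already asymptotic to $\zeta_*$, cannot feed a persistent component along the weak unstable direction'' is circular --- asymptotics to $\zeta_*$ is what is being proved. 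The preliminary compactness argument has the same defect in weaker form: if $\zeta_{n_j}\to\zeta_\infty$, then $\cR(\zeta_\infty)$ is again a limit point, but limit sets are merely invariant, not pointwise fixed, so ``any limit point is a fixed point of $\cR$'' does not follow; upgrading it would require a global rigidity statement for $\cR$-invariant compacta with golden-mean combinatorics, which is not contained in the hyperbolicity black box as you invoke it. What the cited proofs actually do at this juncture is not soft: they establish hyperbolicity of the \emph{two-dimensional} renormalization operator at the degenerately embedded fixed point (its differential being the one-dimensional one plus a strongly contracting part coming from the collapse of the $y$-dependence), and then show that the semi-Siegel H\'enon maps $H_b$ lie in the resulting codimension-one stable manifold, the fixed multiplier $e^{2\pi i\theta_*}$ at $\mathbf{x}_b$ being the tuned quantity transverse to the unstable direction (\cite{GaRaYam}, \cite{Y}). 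Without some substitute for that identification --- for instance, shadowing the two-dimensional tower by the one-dimensional tower of an actual Siegel polynomial together with an argument that the unstable coordinate of the $H_b$-pair vanishes --- your Step 2 does not close, and with it the geometric rate claimed in (i)--(iii).
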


\subsection{Renormalization Limit Set}

Define the {\it $n$th microscope map of depth $k$} by
$$
\Phi_n^{n+k} := \Phi_{n+1} \circ \Phi_{n+2} \circ \ldots{} \circ \Phi_{n+k}.
$$

Let
$$
\Omega_n^{n+k} := \Phi_n^{n+k}(\Omega)
\hspace{5mm} \text{and} \hspace{5mm}
\Gamma_n^{n+k} := \Phi_n^{n+k}(\Gamma).
$$
Observe that $\{\Omega_n^{n+k} \cup \Gamma_n^{n+k}\}_{k=0}^\infty$ is a nested sequence of open sets. See \figref{fig:microscope}

\begin{prop}\label{2dcap}
Let $\lambda_* \in \bbD$ be the universal scaling factor. Then for all $0 \leq n, k$, we have
$$
\text{\emph{diam}}(\Omega_n^{n+k} \cup \Gamma_n^{n+k}) = O(|\lambda_*|^k).
$$
Consequently, there exists a point $(\kappa_n, 0) \in \Omega$, called the \emph{$n$th cap}, such that
$$
\bigcap_{k=0}^\infty \Omega_n^{n+k} \cup \Gamma_n^{n+k}= (\kappa_n,0).
$$
Moreover, $\kappa_n$ converges to $1$ geometrically fast as $n \to \infty$.
\end{prop}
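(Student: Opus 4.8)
The plan is to treat the two coordinate directions separately, using that each microscope map $\Phi_{n+1}=H_{n+1}\circ\Lambda_{n+1}$ is upper‑triangular in a strong sense, and then to read off the "cap" as the unique point of a nested intersection of shrinking compacta. First, for the $y$‑direction: since $H_{n+1}(x,y)=((a_n)_y^{-1}(x),y)$ preserves $y$‑levels and $\Lambda_{n+1}(x,y)=(\lambda_{n+1}x+c_{n+1},\lambda_{n+1}y)$, the second coordinate of $\Phi_n^{n+k}=\Phi_{n+1}\circ\cdots\circ\Phi_{n+k}$ is multiplication by $\prod_{j=1}^{k}\lambda_{n+j}$. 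By \thmref{convergence}(ii) we may write $\lambda_m=\lambda_*(1+\epsilon_m)$ with $\sum_m|\epsilon_m|<\infty$, so $\bigl|\prod_{j=1}^{k}\lambda_{n+j}\bigr|\le|\lambda_*|^{k}\exp\!\bigl(\sum_m|\epsilon_m|\bigr)$ uniformly in $n$ and $k$, and since $V$ is bounded the $y$‑extent of $\Omega_n^{n+k}\cup\Gamma_n^{n+k}$ is $O(|\lambda_*|^{k})$.

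Next, the $x$‑direction, which is the heart of the matter. I would first check that $(1,0)$ is a fixed point of $\Phi_*$: from $\xi_*(0)=1$ and $\lambda_*=\eta_*\circ\xi_*(0)=\eta_*(1)$ we get $\phi_*(1)=\eta_*^{-1}(\lambda_*)=1$, so $\Phi_*(1,0)=(1,0)$ with $D\Phi_*(1,0)=\mathrm{diag}\bigl(\lambda_*/\eta_*'(1),\lambda_*\bigr)$. Using the known structure of the renormalization fixed point $\zeta_*$ (giving $|\eta_*'(1)|\ge 1$) both multipliers have modulus at most $|\lambda_*|$, and the domains $\Omega,\Gamma$ are set up in \cite{Y} so that $\Phi_*$ maps $\overline{\Omega\cup\Gamma}$ into a compact subset of $\Omega\cup\Gamma$ with $\bigcap_{k}\Phi_*^{k}(\overline{\Omega\cup\Gamma})=\{(1,0)\}$. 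By \thmref{convergence}(iii) (upgraded to $C^1$ via Cauchy estimates) together with \lemref{degeneration}, $\Phi_m\to\Phi_*$ geometrically on $\overline{\Omega\cup\Gamma}$; hence there is $m_0$ so that for $m\ge m_0$ each $\Phi_m$ maps $\overline{\Omega\cup\Gamma}$ into itself and contracts there with factor tending to $|\lambda_*|$, while the finitely many $\Phi_m$ with $m<m_0$ are self‑maps of $\overline{\Omega\cup\Gamma}$ with uniformly bounded Lipschitz constant. A bounded number of initial microscope steps thus drives $\Omega\cup\Gamma$ into a small neighborhood of $(1,0)$ on which the linearized rate $\le|\lambda_*|$ controls all further contraction; combined with the $y$‑estimate this gives $\diam(\Omega_n^{n+k}\cup\Gamma_n^{n+k})=O(|\lambda_*|^{k})$ uniformly in $n$.

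With the diameter bound in hand the rest is bookkeeping. The sets $\Omega_n^{n+k}\cup\Gamma_n^{n+k}$ are nested with diameters tending to $0$, so the intersection of their closures is a single point $(\kappa_n,y_n)$; the $y$‑estimate forces $y_n=0$, and since $\Omega_n^{n}=\Omega$ while $\Phi_{n+1}(\overline{\Omega\cup\Gamma})$ is compactly contained in $\Omega$, we get $(\kappa_n,0)\in\Omega$. For $\kappa_n\to 1$, fix a basepoint $p\in\overline{\Omega\cup\Gamma}$, so $\Phi_{n+1}\circ\cdots\circ\Phi_{n+k}(p)\to(\kappa_n,0)$ and $\Phi_*^{k}(p)\to(1,0)$ as $k\to\infty$, and telescope:
$$
\Phi_{n+1}\circ\cdots\circ\Phi_{n+k}(p)-\Phi_*^{k}(p)=\sum_{j=1}^{k}\Bigl(\Phi_{n+1}\circ\cdots\circ\Phi_{n+j}\circ\Phi_*^{k-j}(p)-\Phi_{n+1}\circ\cdots\circ\Phi_{n+j-1}\circ\Phi_*^{k-j+1}(p)\Bigr).
$$
For $n\ge m_0$ the $j$th summand is bounded by $\operatorname{Lip}(\Phi_{n+1}\circ\cdots\circ\Phi_{n+j-1})\cdot\|\Phi_{n+j}-\Phi_*\|_{C^0(\overline{\Omega\cup\Gamma})}\le C\rho^{\,j-1}\delta^{\,n+j}$, with $\rho<1$ a uniform contraction factor for the $\Phi_m$ and $\delta<1$ the convergence rate of \thmref{convergence}; summing the geometric series in $j$ gives a bound $C'\delta^{n}$ uniform in $k$, so letting $k\to\infty$ yields $|\kappa_n-1|\le C'\delta^{n}$.

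I expect the genuine difficulty to sit entirely in the $x$‑direction step: one must know that the limit microscope $\Phi_*$ is hyperbolic at $(1,0)$ with contraction rate no worse than $|\lambda_*|$ — equivalently $|\eta_*'(1)|\ge 1$ — and that the prescribed domains $\Omega\cup\Gamma$ lie in the basin of $(1,0)$ and are mapped compactly into themselves. These are assertions about the fine geometry of the renormalization fixed point and about the domain construction of \cite{Y}, not soft consequences of \thmref{convergence} alone; once they are granted, the $y$‑estimate, the nested‑compacta argument, and the telescoping above are all routine.
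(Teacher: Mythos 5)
The paper does not actually prove Proposition~\ref{2dcap}: it appears in the Preliminaries as part of a summary of the renormalization theory, with the reader referred to \cite{Y} and \cite{GaRaYam} for details, so there is no in-paper argument to compare yours against. Taken on its own terms, your reconstruction is the natural one and is essentially sound: the exact $y$-scaling by $\prod_j\lambda_{n+j}$ with $\lambda_m\to\lambda_*$ geometrically, the identification of $(1,0)$ as the fixed point of $\Phi_*$ via $\phi_*(1)=\eta_*^{-1}(\lambda_*)=\eta_*^{-1}(\eta_*(\xi_*(0)))=1$, the nested-compacta argument for the cap, and the telescoping estimate for $|\kappa_n-1|\le C'\delta^n$ are all correct, and you are right that the genuinely nontrivial inputs are the domain construction of \cite{Y} (that $\Phi_*$ maps $\overline{\Omega\cup\Gamma}$ compactly into $\Omega\cup\Gamma$ with $(1,0)$ attracting all of it) and the multiplier $\phi_*'(1)=\lambda_*/\eta_*'(1)$; note that \thmref{cap derivative} confirms your guess in sharpened form, since $u_n\to\lambda_*^2$ forces $\eta_*'(1)=\lambda_*^{-1}$.

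One step is stated too loosely to deliver the claimed rate as written: contracting ``with factor tending to $|\lambda_*|$'' on a neighborhood of $(1,0)$ only yields $\diam=O((|\lambda_*|+\epsilon)^k)$, not $O(|\lambda_*|^k)$, because the per-step Lipschitz constant near a fixed point with spectral radius $|\lambda_*|$ can only be bounded by $|\lambda_*|+\epsilon$. To get the sharp exponent you must use the triangular structure you already set up: the $y$-extent is \emph{exactly} $|\prod_j\lambda_{n+j}|\cdot\diam(V)=O(|\lambda_*|^k)$ with no loss, while the $x$-block contracts at rate $|\lambda_*|^2+\epsilon<|\lambda_*|$, so the recursion $X_{k+1}\le(|\lambda_*|^2+\epsilon)X_k+M|\lambda_*|^k$ solves to $X_k=O(|\lambda_*|^k)$. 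A related cosmetic point: compact inclusion $\Phi_*(\overline{\Omega\cup\Gamma})\Subset\Omega\cup\Gamma$ does not by itself give Euclidean contraction on the whole domain; you need either the hyperbolic metric of $Z\cup W$ or, as you do invoke, a bounded number of initial steps to enter a linearizing neighborhood, with that bound uniform in $n$. Neither issue is a structural flaw --- the ingredients to repair both are present in your write-up.
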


\begin{remark}
The cap is a dynamically defined point with the same combinatorial address as the critical value $\xi_*(0) = 1$. In \cite{dCLMa}, the analog of the critical value is referred to as the {\it tip}.
\end{remark}

\begin{figure}[h]
\centering
\includegraphics[scale=0.25]{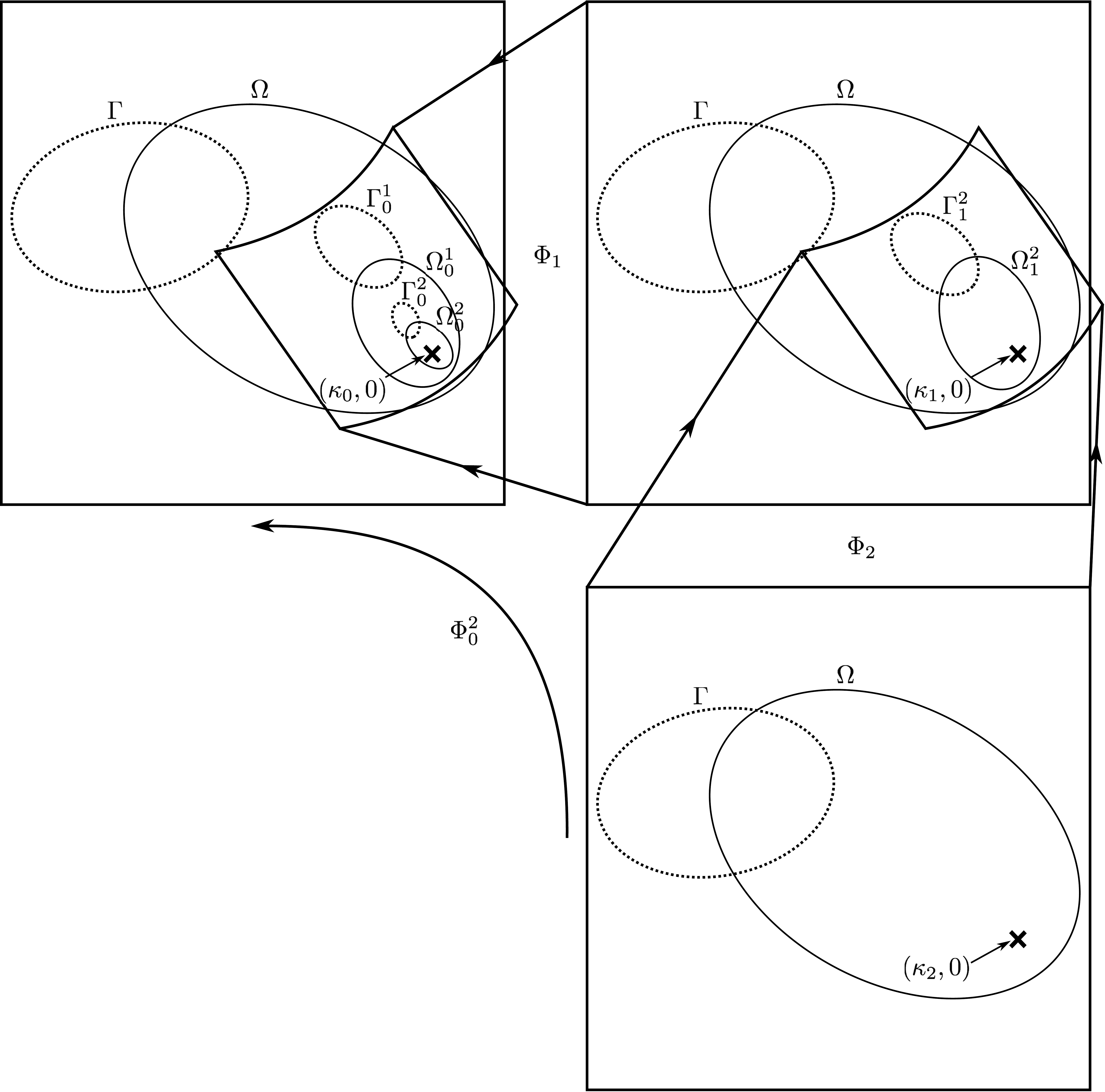}
\caption{The renormalization microscope map $\Phi_0^2$ obtained by composing the non-linear changes of coordinates $\Phi_1$ and $\Phi_2$. We have $\Omega_0^1 = \Phi_1(\Omega)$, $\Gamma_0^1 = \Phi_1(\Gamma)$, $\Omega_0^2 =\Phi_0^2(\Omega)$, $\Gamma_0^2 = \Phi_0^2(\Gamma)$, and $(\kappa_0,0)=\Phi_1((\kappa_1,0)) = \Phi_0^2((\kappa_2,0))$.}
\label{fig:microscope}
\end{figure}

Denote
$$
\hat\Omega_n := \Phi_0 \circ \Phi_0^n(\Omega)
\hspace{5mm} \text{and} \hspace{5mm}
\hat\Gamma_n := \Phi_0 \circ \Phi_0^n(\Gamma),
$$
where $\Phi_0$ is given in \eqref{eq:initial normalization}.

\begin{lem}
Let $\{q_n\}_{n=0}^\infty$ be the Fibonacci sequence. Define
$$
\hat A_n := H_b^{q_{2n+1}}|_{\hat \Omega_n}
\hspace{5mm} \text{and} \hspace{5mm}
\hat B_n := H_b^{q_{2n}}|_{\hat \Gamma_n}.
$$
Then the $n$th renormalization $\Sigma_n = (A_n, B_n)$ is given by
$$
A_n = (\Phi_0^n)^{-1} \circ \Phi_0^{-1} \circ \hat A_n \circ \Phi_0 \circ \Phi_0^n,
\hspace{5mm} \text{and} \hspace{5mm}
B_n = (\Phi_0^n)^{-1} \circ \Phi_0^{-1} \circ \hat B_n \circ \Phi_0 \circ \Phi_0^n.
$$
\end{lem}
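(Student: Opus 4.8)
The plan is to establish the two displayed identities simultaneously by induction on $n$, with the Fibonacci recursion $q_{m+1}=q_m+q_{m-1}$ doing the essential bookkeeping. To lighten the notation I would write $\Psi_n:=\Phi_0\circ\Phi_0^n$, so that the claim becomes $A_n=\Psi_n^{-1}\circ\hat A_n\circ\Psi_n$ and $B_n=\Psi_n^{-1}\circ\hat B_n\circ\Psi_n$, where $\hat A_n=H_b^{q_{2n+1}}$ and $\hat B_n=H_b^{q_{2n}}$ on the domains $\hat\Omega_n=\Psi_n(\Omega)$ and $\hat\Gamma_n=\Psi_n(\Gamma)$ (these domain identifications being immediate from $\Omega=\Phi_0^{-1}(\hat\Omega_0)$, $\Gamma=\Phi_0^{-1}(\hat\Gamma_0)$ and the definition of $\hat\Omega_n,\hat\Gamma_n$).

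For the base case $n=0$ I would simply unwind definitions. Since $\{q_n\}$ is the Fibonacci sequence, $q_0=q_1=1$, so $\hat A_0=H_b|_{\hat\Omega_0}$ and $\hat B_0=H_b|_{\hat\Gamma_0}$, which are exactly the two pieces of the pair representation $\hat\Sigma_0$; and because $\Phi_0^0$ is the empty composition $\mathrm{id}$, we have $\Psi_0=\Phi_0$ and $\Sigma_0=(\Phi_0^{-1}\circ\hat A_0\circ\Phi_0,\ \Phi_0^{-1}\circ\hat B_0\circ\Phi_0)$ is precisely the normalized pair representation.

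For the inductive step, assume the identity at level $n$. From the microscope definition, $\Phi_0^{n+1}=\Phi_0^n\circ\Phi_{n+1}$, hence $\Psi_{n+1}=\Psi_n\circ\Phi_{n+1}$. Substituting the inductive hypothesis into the recursion for $\Sigma_{n+1}$ and using $H_b^j\circ H_b^k=H_b^{j+k}$ yields
$$
B_n\circ A_n^2=\Psi_n^{-1}\circ H_b^{q_{2n}+2q_{2n+1}}\circ\Psi_n,\qquad B_n\circ A_n=\Psi_n^{-1}\circ H_b^{q_{2n}+q_{2n+1}}\circ\Psi_n.
$$
The Fibonacci recursion gives $q_{2n}+q_{2n+1}=q_{2n+2}=q_{2(n+1)}$ and $q_{2n}+2q_{2n+1}=q_{2n+2}+q_{2n+1}=q_{2n+3}=q_{2(n+1)+1}$, so conjugating by $\Phi_{n+1}$ converts these into $A_{n+1}=\Psi_{n+1}^{-1}\circ H_b^{q_{2(n+1)+1}}\circ\Psi_{n+1}$ and $B_{n+1}=\Psi_{n+1}^{-1}\circ H_b^{q_{2(n+1)}}\circ\Psi_{n+1}$, which is the claim at level $n+1$ upon recalling $\Psi_{n+1}=\Phi_0\circ\Phi_0^{n+1}$ and $\hat\Omega_{n+1}=\Psi_{n+1}(\Omega)$, $\hat\Gamma_{n+1}=\Psi_{n+1}(\Gamma)$.

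The displayed manipulations are purely formal; the point that actually needs justification — and which I would invoke from the construction of $\bfR$ in \cite{Y} rather than reprove — is that the composed iterates $\hat B_n\circ\hat A_n^2$ and $\hat B_n\circ\hat A_n$ are genuinely well defined as iterates of $H_b$ on $\hat\Omega_{n+1}$ and $\hat\Gamma_{n+1}$, i.e. that the closest-return combinatorics of the golden-mean rotation really do present the renormalization as a return-map-type object with Fibonacci return times. Granting that, the remaining content is the arithmetic of the return moments, which the recursion $q_{m+1}=q_m+q_{m-1}$ makes transparent. The only real pitfall is index hygiene: keeping straight that $\Phi_0^n$ denotes the depth-$n$ microscope map (not an iterate of the affine map $\Phi_0$), and that the asymmetry $q_{2n+1}$ versus $q_{2n}$ records that the $A$-pieces carry the ``odd'' returns while the $B$-pieces carry the ``even'' ones.
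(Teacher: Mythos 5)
Your induction is correct: the base case matches the normalized pair representation (since $q_0=q_1=1$ for the golden mean), the step is exactly the Fibonacci bookkeeping $q_{2n}+q_{2n+1}=q_{2n+2}$, $q_{2n}+2q_{2n+1}=q_{2n+3}$ applied to the recursion $A_{n+1}=\Phi_{n+1}^{-1}\circ B_n\circ A_n^2\circ\Phi_{n+1}$, $B_{n+1}=\Phi_{n+1}^{-1}\circ B_n\circ A_n\circ\Phi_{n+1}$ together with $\Phi_0^{n+1}=\Phi_0^n\circ\Phi_{n+1}$, and you correctly flag that the only non-formal content (well-definedness of the return iterates on the microscoped domains) is part of the construction in \cite{Y}. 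The paper states this lemma without proof, deferring to \cite{Y}, and your argument is precisely the intended one.
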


In \cite{GaRaYam}, Gaidashev, Radu and Yampolsky showed that the renormalization limit set for a semi-Siegel H\'enon map coincides with its Siegel boundary:

\begin{thm}\label{limit curve}
For $n \in \mathbb{N}$, let
$$
X_n := \bigcup_{i = 0}^{q_{2n+1}-1} H_b^i(\hat\Omega_n)
\hspace{5mm} \text{and} \hspace{5mm}
Y_n := \bigcup_{i = 0}^{q_{2n}-1} H_b^i(\hat\Gamma_n).
$$
Then the Siegel boundary for $H_b$ is given by
$$
\partial \cD_b := \bigcap_{n=1}^\infty X_n \cup Y_n.
$$
\end{thm}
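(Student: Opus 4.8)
\emph{Proof proposal.}
The plan is to realize $\{X_n\cup Y_n\}_{n\ge 1}$ as a decreasing sequence of open neighborhoods of $\partial\cD_b$ whose ``transverse width'' tends to $0$, and to extract $\partial\cD_b=\bigcap_{n\ge1}(X_n\cup Y_n)$ from the two inclusions $\partial\cD_b\subseteq X_n\cup Y_n$ and $\bigcap_{n\ge1}(X_n\cup Y_n)\subseteq\partial\cD_b$. Several structural facts come for free. The renormalization relations force $\Phi_n(\Omega),\Phi_n(\Gamma)\subseteq\Omega$ (this is exactly what makes the defining compositions of $A_n,B_n$ meaningful), hence $\hat\Omega_n\cup\hat\Gamma_n\subseteq\hat\Omega_{n-1}$; combined with the identities $\hat A_n=\hat B_{n-1}\circ\hat A_{n-1}^{\,2}$, $\hat B_n=\hat B_{n-1}\circ\hat A_{n-1}$ and the domain conditions implicit in their well-definedness, this shows that every tile $H_b^i(\hat\Omega_n)$ or $H_b^i(\hat\Gamma_n)$ lies inside one of the tiles $H_b^j(\hat\Omega_{n-1})$, $H_b^j(\hat\Gamma_{n-1})$, so that $X_n\cup Y_n\subseteq X_{n-1}\cup Y_{n-1}$. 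By \propref{2dcap} the pieces $\hat\Omega_n,\hat\Gamma_n$ have diameter $O(|\lambda_*|^n)$ and shrink onto the cap $p:=\Phi_0((\kappa_0,0))$; once we know (from the combinatorics below) that each $\hat\Omega_n$ meets $\partial\cD_b$, closedness of $\partial\cD_b$ gives $p\in\partial\cD_b$. Finally, by \thmref{continuous circle}, $\gamma:=\Psi_b|_{\partial\bbD\times\{0\}}$ is a homeomorphism of $\partial\bbD$ onto $\partial\cD_b$ conjugating the rigid rotation $R_{\theta_*}$ to $H_b|_{\partial\cD_b}$.

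\emph{The inclusion $\partial\cD_b\subseteq X_n\cup Y_n$.} The renormalization of \cite{Y} is built precisely so that $\hat A_n=H_b^{q_{2n+1}}|_{\hat\Omega_n}$ and $\hat B_n=H_b^{q_{2n}}|_{\hat\Gamma_n}$ form a ``commuting pair'' realizing the golden-mean return combinatorics at the closest return moments $q_{2n},q_{2n+1}$. Transported by $\gamma^{-1}$, the traces $\hat\Omega_n\cap\partial\cD_b$ and $\hat\Gamma_n\cap\partial\cD_b$ become two arcs of $\partial\bbD$ abutting at $\gamma^{-1}(p)$ which generate the $\theta_*$-dynamical partition of $\partial\bbD$ at the corresponding level (this is also where the nonemptiness and connectedness of $\hat\Omega_n\cap\partial\cD_b$ used above come from). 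Since a dynamical partition tiles the entire circle, the forward orbit segments of these two arcs of lengths $q_{2n+1}$ and $q_{2n}$ cover $\partial\bbD$; pushing this forward by $\gamma$ and using the conjugacy gives $\partial\cD_b\subseteq\bigcup_{i=0}^{q_{2n+1}-1}H_b^i(\hat\Omega_n)\cup\bigcup_{i=0}^{q_{2n}-1}H_b^i(\hat\Gamma_n)=X_n\cup Y_n$.

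\emph{The inclusion $\bigcap_{n\ge1}(X_n\cup Y_n)\subseteq\partial\cD_b$.} It suffices to show
$$
\delta_n:=\max\Bigl\{\ \max_{0\le i<q_{2n+1}}\diam H_b^i(\hat\Omega_n)\ ,\ \max_{0\le i<q_{2n}}\diam H_b^i(\hat\Gamma_n)\ \Bigr\}\ \xrightarrow[n\to\infty]{}\ 0 ,
$$
because every tile meets $\partial\cD_b$ (the pieces $\hat\Omega_n,\hat\Gamma_n$ do, and $\partial\cD_b$ is $H_b$-invariant), so $X_n\cup Y_n$ lies in the $\delta_n$-neighborhood of the closed set $\partial\cD_b$; hence $\bigcap_{n\ge1}(X_n\cup Y_n)\subseteq\partial\cD_b$, and with the previous inclusion this proves $\partial\cD_b=\bigcap_{n\ge1}(X_n\cup Y_n)$. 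Granting a uniform bound on the distortion of $H_b^i|_{\hat\Omega_n}$ for $0\le i<q_{2n+1}$ and of $H_b^i|_{\hat\Gamma_n}$ for $0\le i<q_{2n}$ (the \emph{a priori bounds}), the decay $\delta_n\to 0$ is immediate: since $\diam\hat\Omega_n\asymp|\lambda_*|^n$ and $\hat\Omega_n,\hat\Gamma_n$ occupy a $\asymp|\lambda_*|$-fraction of $\hat\Omega_{n-1}$, bounded distortion propagates this ratio to the tiles, so that each level-$n$ tile is a $\asymp|\lambda_*|$-fraction of the level-$(n-1)$ tile containing it, giving $\delta_n\lesssim|\lambda_*|\,\delta_{n-1}$ and hence geometric decay.

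\emph{The main obstacle.} This is exactly the \emph{a priori bounds}: the orbit segments have length $q_{2n+1}\asymp\theta_*^{-(2n+1)}\to\infty$, so controlling the distortion of $H_b$ along them is not a soft matter and a naive bound $\diam H_b^i(\hat\Omega_n)\le\|DH_b\|^i\,\diam\hat\Omega_n$ is useless. I would derive the a priori bounds from renormalization hyperbolicity (\thmref{convergence}): the rescaled return maps $A_n,B_n$ converge geometrically to the fixed pair $(A_*,B_*)$, and the recursion $\hat A_n=\hat B_{n-1}\circ\hat A_{n-1}^{\,2}$, $\hat B_n=\hat B_{n-1}\circ\hat A_{n-1}$ breaks the orbit of $\hat\Omega_n$ into boundedly many sub-orbits of $\hat\Omega_{n-1}$ and $\hat\Gamma_{n-1}$, so an induction can let bounded geometry at level $n-1$ reproduce bounded geometry at level $n$. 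The two inputs to the inductive step are: (i) the distortion accumulated along the orbit of $\hat\Omega_n$ is controlled by $\exp\!\bigl(C\sum_{i<q_{2n+1}}\diam H_b^i(\hat\Omega_n)\bigr)$, and the tile diameters are comparable to the lengths of the corresponding atoms of a dynamical partition of $\partial\cD_b$, which sum to $O(1)$; and (ii) the thickness of the tiles transverse to $\partial\cD_b$ stays summably small because $H_b$ contracts the height coordinate by the factor $\nu$ (with $|\nu|=|b|<1$) inside the Siegel cylinder $\cC_b$, where the semi-Siegel normal form is available. Closing this loop — in particular keeping the transverse thickness under control while the orbit passes repeatedly in and out of $\cC_b$ — is the delicate technical heart of the argument; everything else reduces to the standard combinatorics of circle rotations together with the cited results.
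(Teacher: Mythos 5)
First, a point of reference: the paper does not prove \thmref{limit curve} at all --- it is quoted as a result of Gaidashev--Radu--Yampolsky \cite{GaRaYam}, so there is no internal proof to compare against; your proposal has to stand on its own, and as it stands it has two genuine gaps. The first is in the inclusion $\partial\cD_b\subseteq X_n\cup Y_n$: you assert that $\hat\Omega_n\cap\partial\cD_b$ and $\hat\Gamma_n\cap\partial\cD_b$ are arcs abutting at the cap which realize the level-$n$ dynamical partition of the circle, justifying this only by saying the renormalization ``is built precisely so that'' this holds. That claim is essentially the content of the theorem: the formal recursions $\hat A_n=\hat B_{n-1}\circ\hat A_{n-1}^2$, $\hat B_n=\hat B_{n-1}\circ\hat A_{n-1}$ guarantee nothing about how the actual Siegel boundary sits relative to the bidisks $\hat\Omega_n,\hat\Gamma_n$ (a priori the renormalization limit set could be an invariant set having nothing to do with $\partial\cD_b$), and proving that the boundary does thread through these domains with the golden-mean return combinatorics is the main achievement of \cite{GaRaYam}. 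Note also that deducing it from \thmref{continuous circle} inverts the logic of that reference, where the continuous invariant circle is \emph{obtained from} the identification of the limit set with $\partial\cD_b$; inside the present paper both statements are quoted, so the appeal is formally admissible, but it does not by itself produce the partition structure you need --- you still must show the arcs of $\partial\cD_b$ between the relevant orbit points of the cap lie inside $\hat\Omega_n$ and $\hat\Gamma_n$, and no argument for that is given (your nonemptiness of $\hat\Omega_n\cap\partial\cD_b$, and hence the membership of the cap in $\partial\cD_b$, is fed back from this same unproven claim, so the reasoning is circular at this point).

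The second gap you name yourself: the a priori bounds giving $\delta_n\to 0$. The sketch you offer is not sufficient. A Denjoy-type bound of the form $\exp\bigl(C\sum_i\diam H_b^i(\hat\Omega_n)\bigr)$ controls distortion only if the summed diameters are already known to be small, which is exactly the conclusion being bootstrapped, and there is no two-dimensional Koebe or cross-ratio tool to replace it; moreover the tiles are genuinely two-dimensional, and their transverse thickness is governed by the $|b|$-contraction only while the orbit stays where the normal form is valid, as you concede. The way this is actually handled in \cite{GaRaYam} and \cite{Y} is not by direct distortion estimates along orbits of length $q_{2n+1}$, but by expressing the level-$n$ tiles through the microscope maps $\Phi_n^{n+k}$ and the bounded families of maps produced by renormalization convergence (\thmref{convergence}, \propref{2dcap}), so that the geometric decay is inherited from $|\lambda_*|<1$ rather than proved by controlling $H_b^{q_{2n+1}}$ directly. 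So your overall skeleton (nested tilings, covering of the boundary, shrinking diameters) is the right shape, but both load-bearing steps --- the combinatorial covering and the shrinking --- are asserted or left open rather than proved.
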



\section{Universality}

The proof of the main theorem involves giving precise estimates for various geometric quantities that arise when analyzing the dynamics of the semi-Siegel H\'enon map $H_b$ near its Siegel boundary $\partial \cD_b$. In order to do this, we first need an explicit description for the $n$th renormalization $\Sigma_n = (A_n, B_n)$ of $H_b$. In \cite{Y}, the author showed that $\Sigma_n$ has a {\it universal} first-order approximation in terms of its Jacobian. In this section, we strengthen this result to better suit our application.

As before, we write
$$
A_n(x,y) = \begin{bmatrix}
a_n(x,y)\\
h_n(x,y)
\end{bmatrix}
\hspace{5mm} \text{and} \hspace{5mm}
B_n(x,y) = \begin{bmatrix}
b_n(x,y)\\
x
\end{bmatrix}.
$$
Recall that $A_n$ and $B_n$ represent the $q_{2n+1}$th and $q_{2n}$th iterate of $H_b$ respectively. Accordingly, we expect the Jacobian of $A_n$ and $B_n$ to be on the order of
$$
 b^{q_{2n+1}} = \Jac H_b^{q_{2n+1}}
\hspace{5mm} \text{and} \hspace{5mm}
b^{q_{2n}} = \Jac H_b^{q_{2n}}
\hspace{5mm} \text{respectively}.
$$

The following theorem is a simplification of Theorem 7.3 in \cite{Y} in the case when the map being renormalized has constant Jacobian. In the general, non-constant Jacobian case, the dependence of $b_n$ on $y$ has a factor of $e^{r_n}$ for some uniformly bounded sequence $\{r_n\}_{n=0}^\infty \subset \bbC$. This is to account for small fluctuations caused by variation in the Jacobian along the renormalization limit set.

\begin{thm}\label{universalityb}
For $n \geq 0$, we have
$$
B_n(x,y)
=\begin{bmatrix}
\xi_n(x) - b^{q_{2n}} \, \beta(x) \, y \, (1+O(\rho^n))\\
x
\end{bmatrix},
$$
where $0 < \rho <1$ is a uniform constant; and $\beta(x)$ is a universal function that is uniformly bounded away from $0$ and $\infty$, and has a uniformly bounded derivative and distortion.
\end{thm}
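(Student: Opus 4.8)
The plan is to prove \thmref{universalityb} by induction on $n$, tracking the precise first-order ($y$-linear) behavior of $B_n$ through the renormalization recursion $B_{n+1} = \Phi_{n+1}^{-1} \circ B_n \circ A_n \circ \Phi_{n+1}$. The base case $n=0$ is essentially the definition: $B_0 = \Phi_0^{-1} \circ H_b|_{\hat\Gamma_0} \circ \Phi_0$ is an explicit affine-conjugated H\'enon map, so $b_0(x,y) = \xi_0(x) - b\,\beta_0(x)\,y$ with $\beta_0 \equiv \lambda_0$ a constant (here $q_0 = 1$, so $b^{q_0} = b$), which certainly has the required regularity. For the inductive step, I would write $A_n(x,y) = (a_n(x,y), h_n(x,y))$ and $\Phi_{n+1} = H_{n+1}\circ \Lambda_{n+1}$ with $\Lambda_{n+1}$ the affine rescaling \eqref{eq:affine rescaling} and $H_{n+1}(x,y) = ((a_n)_y^{-1}(x), y)$, then compute the second coordinate of the composition. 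The key structural fact is that, because $B_n$ has trivial second coordinate $(x \mapsto x)$ and $\Phi_{n+1}$ is upper-triangular-ish in the appropriate sense, the second coordinate of $B_{n+1}$ is again just $x \mapsto x$, so all the content is in the first coordinate $b_{n+1}$.

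The heart of the argument is the chain-rule computation for $\partial_y b_{n+1}$. Differentiating $B_{n+1} = \Phi_{n+1}^{-1} \circ B_n \circ A_n \circ \Phi_{n+1}$ and evaluating along $y = 0$ (or rather along the relevant invariant section, where the higher-order-in-$y$ corrections are controlled by \lemref{degeneration}), one picks up: the factor $\partial_y b_n$ coming from $B_n$, which by the inductive hypothesis is $-b^{q_{2n}}\beta_n(x)(1+O(\rho^n))$; the $y$-derivative of the second coordinate of $A_n$, namely $\partial_y h_n$, which is $O(\bar\epsilon^{2^n})$ by \lemref{degeneration}; the $y$-derivative of $\Phi_{n+1}$, which contributes a factor $\lambda_{n+1}$ from $\Lambda_{n+1}$ together with bounded derivatives of $H_{n+1}$; and the $x$-derivative factors from $B_n \circ A_n$ evaluated at the fixed combinatorial point. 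Since $q_{2n+2} = q_{2n+1} + q_{2n}$ and the Jacobians multiply, the exponents bookkeep correctly: the $b^{q_{2n}}$ from $\partial_y b_n$ combines with a $b^{q_{2n+1}}$-order term (the Jacobian of $A_n$, which is $b^{q_{2n+1}}$) — actually one must be careful here, since $A_n$ appears with a single power, not squared, so I expect the dominant $y$-dependence of $B_{n+1}$ to route through $\partial_y b_n \cdot \partial_x a_n$ rather than through $\partial_y h_n$, and I would need to check that the $\partial_y h_n$ route is genuinely lower order (this is where the super-exponential decay of \lemref{degeneration} versus the merely geometric decay of $b^{q_n}$ is essential, since $|b| < \bar\epsilon$ forces $b^{q_{2n}}$ to dominate $\bar\epsilon^{2^n}$ only in the regime $q_{2n} \ll 2^n$, which is true for the Fibonacci sequence).

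To extract the \emph{universal} function $\beta(x)$, I would use \thmref{convergence}: as $n \to \infty$, $\zeta_n \to \zeta_*$, $\lambda_n \to \lambda_*$, and $\Phi_n \to \Phi_*$ all at a geometric rate $\rho^n$. Feeding these convergences into the recursion for $\beta_n$ derived above — which will have the schematic form $\beta_{n+1}(x) = (\text{bounded universal operator depending on } \zeta_n, \lambda_{n+1})[\beta_n](x) \cdot (1 + O(\rho^n))$, with the operator being, roughly, precomposition/postcomposition by pieces of $\eta_n, \xi_n$ and multiplication by a derivative factor — I would show that $\beta_n$ converges geometrically to a fixed function $\beta = \beta_*$ solving the corresponding limiting functional equation built from \eqref{eq:1d limit} and $\Phi_*$. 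The uniform bounds (away from $0$ and $\infty$, bounded derivative, bounded distortion) then follow because each $\beta_n$ is obtained from $\beta_0 = \lambda_0 \neq 0$ by finitely many applications of operators with uniformly bounded multiplicative distortion, and the limit inherits these bounds; non-vanishing of $\beta$ reflects the fact that $H_b$ is a genuine automorphism ($b \neq 0$), so the renormalized pieces never fully degenerate in the $y$-direction at first order.

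The main obstacle I anticipate is the careful bookkeeping of \emph{which} term dominates $\partial_y b_{n+1}$ and verifying that the subdominant contributions — in particular the one flowing through $\partial_y h_n = O(\bar\epsilon^{2^n})$ and any $y^2$-and-higher corrections suppressed by \lemref{degeneration} — are absorbable into the $(1 + O(\rho^n))$ error without spoiling the clean exponent $b^{q_{2n+2}}$. This requires comparing the super-exponential scale $\bar\epsilon^{2^n}$ against the geometric scales $b^{q_{2n}} \sim |b|^{q_{2n}}$ and $\rho^n$, using that the Fibonacci denominators grow only exponentially ($q_n \sim \theta_*^{-n}$), so for $\bar\epsilon$ small enough $\bar\epsilon^{2^n}$ is dwarfed by $|b|^{q_{2n}}\rho^n$; I would isolate this as a preliminary numerical lemma before entering the induction. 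A secondary technical point is that $\Phi_{n+1}$ is non-linear (through $H_{n+1}$), so its $y$-derivative is not simply $\lambda_{n+1}$; but since $a_n$ depends on $y$ only to order $\bar\epsilon^{2^n}$, $H_{n+1}$ is $\bar\epsilon^{2^n}$-close to the $y$-independent map $(x,y) \mapsto (\eta_n^{-1}(x), y)$, and this discrepancy again feeds only into the $O(\rho^n)$ error.
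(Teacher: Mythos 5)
First, a point of reference: the paper does not prove \thmref{universalityb} at all --- it is quoted as ``a simplification of Theorem 7.3 in \cite{Y}'', so there is no in-text proof to match; the argument in \cite{Y} runs a \emph{simultaneous} induction on the Jacobian asymptotics of both members of the pair $(A_n,B_n)$. Your proposal, which inducts on the $y$-dependence of $B_n$ alone and treats everything coming from $A_n$ as an error controlled by \lemref{degeneration}, has a genuine structural gap. Unwinding $B_{n+1}=\Phi_{n+1}^{-1}\circ B_n\circ A_n\circ\Phi_{n+1}$, one finds $b_{n+1}(x,y)=\lambda_{n+1}^{-1}\bigl[a_n\bigl(b_n(\tilde x,\tilde h_n(\tilde x,\tilde y)),\tilde x\bigr)-c_{n+1}\bigr]$ with $\tilde x=\lambda_{n+1}x+c_{n+1}$ independent of $y$ and $A_n\circ H_{n+1}(\tilde x,\tilde y)=(\tilde x,\tilde h_n(\tilde x,\tilde y))$. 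Hence \emph{all} of the $y$-dependence of $b_{n+1}$ is forced through the single chain $\partial_y b_{n+1}=\partial_x a_n\cdot\partial_y b_n\cdot\partial_y\tilde h_n$: there is no alternative ``$\partial_y b_n\cdot\partial_x a_n$ route'' that bypasses $\partial_y h_n$, because the first coordinate of $A_n\circ H_{n+1}$ is identically $\tilde x$ and contributes zero $y$-derivative. To close the induction with the exponent $q_{2n+2}=q_{2n}+q_{2n+1}$ you therefore need the \emph{precise} asymptotics $\partial_y\tilde h_n\sim b^{q_{2n+1}}\cdot(\text{universal factor})(1+O(\rho^n))$ --- a statement of exactly the same depth as the theorem being proved, for the other member of the pair (it is essentially \thmref{universalityaphi}, whose proof in the paper in turn uses \thmref{universalityb}). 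An induction on $B_n$ alone cannot supply this.

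The fallback you propose --- absorbing the $A_n$-contribution into the error using the super-exponential bound $\|\partial_y A_n\|=O(\bar\epsilon^{2^n})$ --- fails for a quantitative reason, and your numerical lemma is backwards. For the Fibonacci sequence, $q_{2n}\sim\phi^{2n}\approx(2.618)^n$, so $q_{2n+1}>2^n$ for all $n$; since $|b|<\bar\epsilon<1$ this gives $|b|^{q_{2n+1}}\le\bar\epsilon^{q_{2n+1}}\le\bar\epsilon^{2^n}$, i.e.\ the bound from \lemref{degeneration} is \emph{weaker} (larger) than the target scale $b^{q_{2n+1}}$, not smaller. Consequently the crude estimate only yields $|\partial_y b_{n+1}|\lesssim|b|^{q_{2n}}\bar\epsilon^{2^n}$, which can exceed $|b|^{q_{2n+2}}$ by the unbounded factor $\bar\epsilon^{2^n}/|b|^{q_{2n+1}}$, so such terms cannot be folded into a relative error $(1+O(\rho^n))$ on a quantity of size $b^{q_{2n+2}}$. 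The whole content of the universality theorems is precisely to upgrade the $\bar\epsilon^{2^n}$ bound to the sharp $b^{q_n}$ asymptotics; one cannot bootstrap from the crude bound alone. The correct repair is to induct simultaneously on $\Jac A_n\sim b^{q_{2n+1}}$ and $\Jac B_n\sim b^{q_{2n}}$ via the multiplicativity of Jacobians under the two composition rules (as in \cite{Y}), and only then integrate in $y$ to recover $\beta$; your base case, limiting functional equation for $\beta_*$, and distortion bounds would then go through essentially as you describe.
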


Observe that
\begin{align}
\Jac B_n(x,y) &= \begin{vmatrix}
\xi_n'(x) - b^{q_{2n}} \, \beta'(x) \, y & -b^{q_{2n}} \, \beta(x)\\
1 & 0
\end{vmatrix}
+
\text{(higher order terms)} \nonumber \\
&= -b^{q_{2n}} \, \beta(x) + \text{(higher order terms)} \label{eq:jacb}
\end{align}
as we expected.

Next, we give a first-order approximation of $A_n$. In \cite{Y}, this is only done for the first coordinate $a_n$ (see Corollary 7.4).

\begin{thm}\label{universalitya}
For $n \geq 0$, we have
$$
A_n(x,y) =
\begin{bmatrix}
\eta_n(x) - b^{q_{2n}} \, \alpha(x) \, y \, (1+O(\rho^n))\\
\lambda_n^{-1} \check \eta_{n-1}(\lambda_n x) - b^{q_{2n}} \, \check{\alpha}(x) \, y \, (1+O(\rho^n))
\end{bmatrix},
$$
where $0 < \rho <1$ is a uniform constant; and
$$
\check \eta_k(x) = \eta_k(x) + O(|b|^{q_{2k}})
\hspace{3mm} , \hspace{3mm}
\alpha(x) := \frac{\eta_*'(x)}{\xi_*'(x)}\beta(x)
\hspace{3mm} \text{and} \hspace{3mm}
\check{\alpha}(x) := \frac{\eta_*'(\lambda_* x)}{\eta_*'(x)} \alpha(x).
$$
\end{thm}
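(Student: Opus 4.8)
\textbf{Proof proposal for Theorem~\ref{universalitya}.}

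The plan is to bootstrap from the already-established first-coordinate estimate (Corollary 7.4 of \cite{Y}) and from \thmref{universalityb}, using the recursive definition of renormalization to propagate the universal first-order form to the second coordinate $h_n$. Recall that $A_{n} = \Phi_{n}^{-1} \circ B_{n-1} \circ A_{n-1}^2 \circ \Phi_{n}$, and that $\Phi_n = H_n \circ \Lambda_n$ where $H_n(x,y) = ((a_{n-1})_y^{-1}(x), y)$ straightens out the first coordinate of $A_{n-1}$. First I would write out $A_n$ explicitly as this composition and track the second coordinate. The key structural point is that $\Phi_n$ acts trivially (affinely, via $\Lambda_n$) on the $y$-coordinate, so the $y$-dependence of $h_n$ is governed by the $y$-dependence of $B_{n-1} \circ A_{n-1}^2$. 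Differentiating in $y$ at $y=0$ and applying the chain rule, the coefficient of $y$ in $h_n$ is a product of (i) $\partial_y$ of the $B_{n-1}$-factor, which by \thmref{universalityb} is $-b^{q_{2n-2}}\beta(\cdot)(1+O(\rho^{n-1}))$ composed with the appropriate point, and (ii) the $y$-derivative accumulated through the two iterates of $A_{n-1}$, which by the inductive hypothesis on $a_{n-1}$ has leading term governed by $\alpha$ and $\eta_*'$. Collecting the exponents, $q_{2n-2} + $ (contributions from the two $A_{n-1}$'s, each carrying $b^{q_{2n-1}}$) should sum to $q_{2n}$ by the Fibonacci relation $q_{2n} = q_{2n-1} + q_{2n-2} + q_{2n-1}$... one must check this arithmetic carefully, but $A_n$ representing $H_b^{q_{2n+1}}$ and the Jacobian scaling make the exponent $b^{q_{2n}}$ forced, consistently with the first-coordinate statement already proved.

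Second, I would identify the universal function $\check\alpha$. The leading-order limit of $h_n$ should, by \thmref{convergence}, be $\lambda_*^{-1}\eta_*^{-1}(\lambda_*\cdot)$-type expressions evaluated along the limiting dynamics; the precise claim $h_n(x,0) = \lambda_n^{-1}\check\eta_{n-1}(\lambda_n x) + O(|b|^{q_{2n}})$ is essentially the statement that the second coordinate of $A_n$ is the first coordinate of $A_{n-1}$ rescaled (this is how $H_n$ and the pair structure interact: $B_n$ has second coordinate exactly $x$, and $A_n$'s second coordinate records one level down). For the $y$-coefficient $\check\alpha$, I would compute the chain-rule product at the limit: the factor $\eta_*'(\lambda_* x)/\eta_*'(x)$ arises precisely as the ratio of derivatives of the linearizing/straightening maps $\phi_* $ at the two relevant points (the rescaled point $\lambda_* x$ versus $x$), exactly as in the definition $\Phi_*(x,y) = (\eta_*^{-1}(\lambda_* x), \lambda_* y)$ from \thmref{convergence}(iii). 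Matching this against $\alpha$ gives $\check\alpha(x) = \dfrac{\eta_*'(\lambda_* x)}{\eta_*'(x)}\alpha(x)$.

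Third, the error terms. I would control the $(1+O(\rho^n))$ factor by combining the geometric convergence rates from \thmref{convergence} (for $\zeta_n \to \zeta_*$, $\lambda_n \to \lambda_*$, $\Phi_n \to \Phi_*$), the inductive $O(\rho^{n-1})$ error from the first-coordinate estimate and from \thmref{universalityb}, and the super-exponential decay of $\partial_y^2 A_{n-1}$ from \lemref{degeneration} (so that the linear-in-$y$ approximation is valid with an error dominated by the geometric term). The uniform bounds on $\check\alpha$, its derivative, and its distortion follow from the corresponding bounds on $\beta$ (hence $\alpha$) in \thmref{universalityb} together with the fact that $\eta_*'$ is uniformly bounded away from $0$ and $\infty$ on the relevant domain with bounded distortion, and that $\lambda_* \in \bbD$.

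The main obstacle I anticipate is the bookkeeping of the composition $\Phi_n^{-1} \circ B_{n-1} \circ A_{n-1}^2 \circ \Phi_{n}$ at the level of second coordinates: because $\Phi_n = H_n \circ \Lambda_n$ is \emph{non-linear} in $x$ (through $(a_{n-1})_y^{-1}$), the inverse $\Phi_n^{-1}$ mixes coordinates in a way that must be expanded to first order in $y$ and estimated, and one must verify that the non-linearity does not introduce spurious lower-order-in-$b$ terms in the $y$-coefficient. Keeping the $b$-exponent exactly at $q_{2n}$ while absorbing everything else into the $O(\rho^n)$ factor — rather than a weaker $O(|b|^{q_{2n}}\cdot\text{something})$ — is the delicate point, and it relies essentially on the already-proven constant-Jacobian version of Theorem 7.3 of \cite{Y} doing the analogous work for $a_n$, which I would cite rather than redo.
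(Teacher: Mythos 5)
Your plan for the second coordinate has a genuine gap. You propose to expand $A_n = \Phi_n^{-1}\circ B_{n-1}\circ A_{n-1}^{2}\circ\Phi_n$ directly and obtain the exponent of $b$ by adding the orders of the individual factors, but that accounting does not work. First, the inductive input is that the $y$-coefficient of $A_{n-1}$ is of order $b^{q_{2(n-1)}}$, not $b^{q_{2n-1}}$, and the relation you invoke, $q_{2n}=q_{2n-1}+q_{2n-2}+q_{2n-1}$, is false: for the golden mean $q_{2n}=q_{2n-1}+q_{2n-2}=2q_{2n-2}+q_{2n-3}$. Second, in a chain rule the orders of the $y$-derivatives of successive factors do not multiply: $\partial_y$ of a composition is a sum of products in which all but one factor is an $x$-derivative of order one. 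Carrying this out carefully for $B_{n-1}\circ A_{n-1}^{2}\circ\Phi_n$, and even after exploiting the exact identity $a_{n-1}\big((a_{n-1})_y^{-1}(x),y\big)=x$ hidden in $H_n$, one only gets $O(|b|^{2q_{2(n-1)}})$, which is still larger than the claimed $b^{q_{2n}}$ since $2q_{2n-2}<q_{2n}$; further exact cancellations are required and your bookkeeping does not produce them. Third, the appeal to ``Jacobian scaling'' to force the exponent is misleading: as the paper notes immediately after the theorem, $\Jac A_n$ is of order $b^{q_{2n+1}}$ while the $y$-coefficient is of order $b^{q_{2n}}$, so the first-order form cannot be read off from the Jacobian (indeed the first-order approximation of $A_n$ does not even ``see'' its Jacobian).

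The paper's proof supplies exactly the mechanism your proposal is missing: using commutativity, it regroups $A_{n+1}=\big(\Phi_{n+1}^{-1}\circ A_n\circ\Phi_{n+1}\big)\circ B_{n+1}$. In this grouping the second coordinate of $B_{n+1}$ is exactly $x$ and the second coordinate of $\Phi_{n+1}$ is exactly $\lambda_{n+1}y$, so the leading $y$-dependence enters solely through the first coordinate of $B_{n+1}$, whose universal form (exponent $b^{q_{2(n+1)}}$ and factor $\beta$) is given by \thmref{universalityb} one level up; the $O(|b|^{q_{2n}})$ $y$-dependence of $A_n$ and of $\Phi_{n+1}$ never contaminates the leading term. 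The coefficient then comes out as the $x$-derivative product $\eta_{n-1}'(\check{x})\,(\eta_n^{-1})'(\tilde{\xi}_{n+1}(x))$ times $\beta(x)$, and identifying its limit as $\eta_*'(\lambda_* x)/\xi_*'(x)$ requires the explicit computation with the fixed-point equation \eqref{eq:1d limit} and its derivative — a step you only gesture at by ``matching'' against $\Phi_*$. Without this rearrangement (or an equivalent exact-cancellation argument), your approach establishes neither the exponent $q_{2n}$ nor the formula for $\check{\alpha}$, which is precisely the new content of the theorem beyond Corollary 7.4 of \cite{Y}.
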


\begin{proof}
Recall that
\begin{equation}\label{eq:adef}
A_{n+1} :=  \Lambda_{n+1}^{-1} \circ H_{n+1}^{-1} \circ B_n \circ A_n^2 \circ H_{n+1} \circ \Lambda_{n+1}.
\end{equation}
Denote
$$
(\tilde{x}, \tilde{y}) := \Lambda_{n+1}(x, y) = (\lambda_{n+1}x+c_{n+1}, \lambda_{n+1}y).
$$
It is not difficult to show, using \thmref{universalityb}, that $c_{n+1} = O(|b|^{q_{2n}})$. Moreover, we have
$$
A_n \circ H_{n+1}(x,y) = \begin{bmatrix}
x\\
\tilde{h}_n(x,y)
\end{bmatrix},
$$
where
$$
\tilde{h}_n(x,y) := h_n((a_n)_y^{-1}(x), y).
$$
Hence,
\begin{equation}\label{eq:xdependence}
h_{n+1}(x, y) = \lambda_{n+1}^{-1}a_n(\tilde{x}, \tilde{h}_n(\tilde{x}, \tilde{y})) = \lambda_{n+1}^{-1}\eta_n(\lambda_{n+1}x) + O(|b|^{q_{2n}}).
\end{equation}

Since $A_n$ and $B_n$ commute, we may rearrange the terms in \eqref{eq:adef} to obtain
$$
A_{n+1} =  \Lambda_{n+1}^{-1} \circ H_{n+1}^{-1} \circ A_n \circ H_{n+1} \circ \Lambda_{n+1} \circ B_{n+1}.
$$
Denote
$$
\tilde{\xi}_{n+1}(x):= \lambda_{n+1}\xi_{n+1}(x)+c_{n+1}
$$
and
\begin{align*}
\tilde{b}_{n+1}(x,y) &:= \lambda_{n+1} b_{n+1}(x,y) + c_{n+1}\\
&= \tilde{\xi}_{n+1}(x) - \lambda_{n+1}b^{q_{2(n+1)}}\beta(x)y(1+O(\rho^{n+1})).
\end{align*}
Then
$$
H_{n+1} \circ \Lambda_{n+1} \circ B_{n+1}(x,y) = \begin{bmatrix}
(a_n)_{\tilde x}^{-1}(\tilde b_{n+1}(x,y))\\
\tilde x
\end{bmatrix}.
$$
Neglecting higher order terms, $(a_n)_{\tilde x}^{-1}(\tilde b_{n+1}(x,y))$ has the same $y$-dependence as
$$
\eta_n^{-1}(\tilde b_{n+1}(x,y)) \approx \eta_n^{-1}(\tilde \xi_{n+1}(x)) - (\eta_n^{-1})'(\tilde \xi_{n+1}(x)) \lambda_{n+1} b^{q_{2(n+1)}} \beta(x)y
$$
Let
$$
\check{x} := \lambda_n\eta_n^{-1}(\tilde \xi_{n+1}(x)).
$$
From \eqref{eq:xdependence}, and again neglecting higher order terms, we see that the second coordinate $h_n\big((a_n)_{\tilde x}^{-1}(\tilde b_{n+1}(x,y)), \tilde x\big)$ has the same $y$-dependence as
$$
\lambda_n^{-1}\eta_{n-1}\big(\lambda_n\eta_n^{-1}(\tilde b_{n+1}(x,y))\big)
\approx
\lambda_n^{-1}\eta_{n-1}(\check{x}) - \eta_{n-1}'(\check{x})(\eta_n^{-1})'(\tilde \xi_{n+1}(x)) \lambda_{n+1} b^{q_{2(n+1)}} \beta(x)y.
$$
Lastly, we compute
\begin{align*}
\lim_{n\to \infty}\eta_{n-1}'(\check{x})(\eta_n^{-1})'(\tilde \xi_{n+1}(x))
&=
\eta_*' \big( \lambda_* \eta_*^{-1}(\lambda_*\xi_*(x))\big)(\eta_*^{-1})'(\lambda_* \xi_*(x))\\
&=
\eta_*' \big(\eta_* \circ \xi_*(\lambda_*^2x))\big)(\eta_*^{-1})'(\eta_* \circ \xi_*(\lambda_*x))\\
&=
\frac{\eta_*' \big(\eta_* \circ \xi_* (\lambda_*^2 x) \big)}{\eta_*'(\xi_*(\lambda_* x))}\\
&=
\frac{\eta_*'( \lambda_* x)}{\xi_*'(\lambda_* x)} \cdot \frac{\xi_*'(\lambda_* x)}{\xi_*'(x)}\\
&= \frac{\eta_*'( \lambda_* x)}{\xi_*'(x)},
\end{align*}
where in the second equality, we used \eqref{eq:1d limit} and in the third equality, we used the derivative of \eqref{eq:1d limit}. The result follows.
\end{proof}

Observe:
\begin{align*}
\Jac A_n(x,y) &= \begin{vmatrix}
\eta_n'(x) - b^{q_{2n}} \, \alpha'(x) \, y & -b^{q_{2n}} \, \alpha(x)\\
\check{\eta}_{n-1}'(\lambda_n x) - b^{q_{2n}} \, \check{\alpha}'(x) \, y & -b^{q_{2n}} \, \check{\alpha}(x)
\end{vmatrix}
+
\text{(higher order terms)}\\
&= \big(-\eta_n'(x) \, \check{\alpha}(x)+ \check{\eta}_{n-1}'(\lambda_n x) \, \alpha(x)\big)b^{q_{2n}} + \text{(higher order terms)}\\
&= \big(-\eta_*'(x) \, \check{\alpha}(x)+ \eta_*'(\lambda_* x) \, \alpha(x)\big)b^{q_{2n}} + \text{(higher order terms)}\\
& = \text{(higher order terms)}.
\end{align*}
Hence, we see that the first-order approximation of $A_n$ is not precise enough to ``see'' the Jacobian of $A_n$, which we expect to be of higher order ($b^{q_{2n+1}}$ rather than $b^{q_{2n}}$). Fortunately, the approximation can be made much more precise by considering $A_n \circ \Phi_{n+1}$ instead of $A_n$.

\begin{thm}\label{universalityaphi}
For $n \geq 0$, we have
$$
A_n \circ \Phi_{n+1}(x,y) =
\begin{bmatrix}
\lambda_{n+1}x+c_{n+1}\\
\check{\xi}_n^{-1}(\lambda_{n+1}x) - b^{q_{2n+1}} \, \chi(x)\, \lambda_{n+1}\, y(1+O(\rho^n))
\end{bmatrix},
$$
where $0 < \rho <1$ is a uniform constant; and
$$
\check \xi_n(x) = \xi_n(x) + O(|b|^{q_{2(n-1)}})
\hspace{3mm} \text{and} \hspace{3mm}
\chi(x) := \frac{\xi_*'(\lambda_* x)}{\xi_*'(x)}\frac{\beta(x)}{\beta(\lambda_* x)} = \frac{\eta_*'(\lambda_* x)}{\eta_*'(x)}\frac{\alpha(x)}{\alpha(\lambda_* x)}.
$$
\end{thm}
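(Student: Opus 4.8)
The plan is to unwind $A_n\circ\Phi_{n+1}$ directly from its definition and then extract the $y$-dependence of its second coordinate through a Jacobian identity. Since $\Phi_{n+1}=H_{n+1}\circ\Lambda_{n+1}$ with $H_{n+1}(x,y)=((a_n)_y^{-1}(x),y)$, and since $a_n((a_n)_y^{-1}(x),y)\equiv x$, one gets at once
$$
A_n\circ\Phi_{n+1}(x,y)=\bigl(\lambda_{n+1}x+c_{n+1},\ \tilde h_n(\lambda_{n+1}x+c_{n+1},\lambda_{n+1}y)\bigr),\qquad \tilde h_n(u,v):=h_n\bigl((a_n)_v^{-1}(u),v\bigr),
$$
with $\tilde h_n$ as in the proof of \thmref{universalitya}. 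This settles the first coordinate exactly and, since it does not depend on $y$, gives $\Jac(A_n\circ\Phi_{n+1})(x,y)=\lambda_{n+1}\,\partial_y[\text{second coordinate}](x,y)$. This is precisely the point of composing with $\Phi_{n+1}$: the Jacobian of $A_n\circ\Phi_{n+1}$ is of the expected order $b^{q_{2n+1}}$, so reading off the second coordinate's $y$-derivative from the Jacobian sidesteps the cancellation, observed right after \thmref{universalitya}, between the order-$b^{q_{2n}}$ quantities $\partial_y a_n$ and $\partial_y h_n$.

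For the $y$-independent part, set $y=0$: by \thmref{universalitya} the second coordinate becomes $h_n(\eta_n^{-1}(\lambda_{n+1}x+c_{n+1}),0)=\lambda_n^{-1}\check\eta_{n-1}\bigl(\lambda_n\eta_n^{-1}(\lambda_{n+1}x+c_{n+1})\bigr)$. The one-dimensional renormalization equations \eqref{eq:1d limit}, which hold for the projections $\eta_n,\xi_n$ up to an error $O(|b|^{q_{2(n-1)}})$, give upon eliminating $\xi_{n-1}$ the identity $\lambda_n^{-1}\eta_{n-1}\bigl(\lambda_n\eta_n^{-1}(w)\bigr)=\xi_n^{-1}(w)+O(|b|^{q_{2(n-1)}})$. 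Combined with $\check\eta_{n-1}=\eta_{n-1}+O(|b|^{q_{2(n-1)}})$ and $c_{n+1}=O(|b|^{q_{2n}})$ (the latter from the proof of \thmref{universalitya}), this identifies the $y$-independent part as $\xi_n^{-1}(\lambda_{n+1}x)+O(|b|^{q_{2(n-1)}})$; \emph{defining} $\check\xi_n$ through $\check\xi_n^{-1}(\lambda_{n+1}x):=\lambda_n^{-1}\check\eta_{n-1}(\lambda_n\eta_n^{-1}(\lambda_{n+1}x+c_{n+1}))$ then yields the asserted $\check\xi_n=\xi_n+O(|b|^{q_{2(n-1)}})$.

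The main step is the $y$-dependent part. By definition $B_{n+1}=\Phi_{n+1}^{-1}\circ B_n\circ A_n\circ\Phi_{n+1}$, i.e. $B_n\circ A_n=\Phi_{n+1}\circ B_{n+1}\circ\Phi_{n+1}^{-1}$; taking Jacobians of both sides, using the chain rule, and rearranging gives the \emph{exact} identity
$$
\Jac(A_n\circ\Phi_{n+1})(x,y)=\frac{\Jac\Phi_{n+1}\bigl(B_{n+1}(x,y)\bigr)\cdot\Jac B_{n+1}(x,y)}{\Jac B_n\bigl((A_n\circ\Phi_{n+1})(x,y)\bigr)}.
$$
Evaluate at $y=0$. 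By \thmref{universalityb} at levels $n+1$ and $n$ (cf. \eqref{eq:jacb}) one has $\Jac B_{n+1}(x,0)=-b^{q_{2(n+1)}}\beta(x)(1+O(\rho^n))$, while $\Jac B_n\bigl((A_n\circ\Phi_{n+1})(x,0)\bigr)=-b^{q_{2n}}\beta(\lambda_*x)(1+O(\rho^n))$, using the first coordinate found above, the bounded distortion of $\beta$, and $\lambda_{n+1}\to\lambda_*$, $c_{n+1}\to0$ geometrically. From $\Phi_{n+1}=H_{n+1}\circ\Lambda_{n+1}$ one has the explicit formula $\Jac\Phi_{n+1}(u,v)=\lambda_{n+1}^2\big/(\partial_x a_n)\bigl((a_n)_{\lambda_{n+1}v}^{-1}(\lambda_{n+1}u+c_{n+1}),\lambda_{n+1}v\bigr)$; evaluating it at $B_{n+1}(x,0)=(\xi_{n+1}(x),x)$ and simplifying with the rearrangement $\eta_*(\xi_*(\lambda_*x))=\lambda_*\xi_*(x)$ of \eqref{eq:1d limit}, its derivative, and \thmref{convergence}, gives $\Jac\Phi_{n+1}\bigl(B_{n+1}(x,0)\bigr)=\lambda_*^2\,\frac{\xi_*'(\lambda_*x)}{\xi_*'(x)}(1+O(\rho^n))$. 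Since $q_{2(n+1)}-q_{2n}=q_{2n+1}$, assembling the three factors produces $\Jac(A_n\circ\Phi_{n+1})(x,0)=-b^{q_{2n+1}}\lambda_{n+1}^2\,\chi(x)(1+O(\rho^n))$ with $\chi(x)=\frac{\xi_*'(\lambda_*x)}{\xi_*'(x)}\frac{\beta(x)}{\beta(\lambda_*x)}$; the alternative expression for $\chi$ is then just the relation $\alpha=(\eta_*'/\xi_*')\beta$. Finally, by \lemref{degeneration} the $y$-dependence of each ingredient on the right-hand side of the displayed identity is super-exponentially small, so the same estimate holds for all $y$; dividing by $\lambda_{n+1}$ and integrating in $y$ from the $y=0$ value of the second coordinate gives exactly the claimed formula.

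The delicate — though essentially routine — points are all in the last step: one must verify that $\Jac B_n$ may legitimately be evaluated via \thmref{universalityb} at the point $(A_n\circ\Phi_{n+1})(x,0)$, whose first coordinate is only known to equal $\lambda_{n+1}x+c_{n+1}$; that the argument $(\xi_{n+1}(x),x)$ of $\Jac\Phi_{n+1}$ stays away from the common critical point of $\eta_*$ and $\xi_*$, so that $\partial_x a_n$ is bounded below there; and that every renormalization-convergence replacement ($\lambda_{n+1}\to\lambda_*$, $\eta_n\to\eta_*$, $\xi_n\to\xi_*$, and the functional equations with their $O(|b|^{q_{2(n-1)}})$ errors) is carried out with a uniform geometric rate $\rho^n$. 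It is essential that this step proceed through the exact identities above rather than by substituting the first-order expansions of $a_n$ and $h_n$ from \thmref{universalitya}: a naive substitution would generate spurious error terms of size $\sim b^{q_{2n}}\rho^n$, which are enormously larger than the genuine order-$b^{q_{2n+1}}$ answer.
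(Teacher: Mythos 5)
Your argument is essentially the paper's own proof: the first coordinate and the $y$-independent part of the second coordinate are obtained the same way (via $\tilde h_n$ and the commutation identity $\eta_n = \xi_n(\lambda_n^{-1}\eta_{n-1}(\lambda_n x)) + O(|b|^{q_{2(n-1)}})$, inverted to give $\check\xi_n^{-1}$), and the $y$-dependence is extracted from the identical Jacobian chain-rule identity for $B_{n+1}=\Phi_{n+1}^{-1}\circ B_n\circ A_n\circ\Phi_{n+1}$, with \thmref{universalityb} supplying $\Jac B_{n+1}$ and $\Jac B_n$ and an explicit evaluation of the Jacobian of the coordinate change (the paper computes $\Jac\Phi_{n+1}^{-1}$ at $\bfx_2$, which is the reciprocal of your $\Jac\Phi_{n+1}$ at $B_{n+1}(x,0)$), followed by integration in $y$. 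The differences are purely presentational, and your write-up is if anything slightly more careful about the points of evaluation and about passing from $y=0$ to general $y$ via \lemref{degeneration}.
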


\begin{proof}
Write
$$
A_n \circ H_{n+1}(x,y) = \begin{bmatrix}
x\\
\tilde{h}_n(x,y)
\end{bmatrix},
$$
where
$$
\tilde{h}_n(x,y) := h_n((a_n)_y^{-1}(x), y).
$$
By \thmref{universalitya}, we have
\begin{equation}\label{eq:tildeh}
\tilde{h}_n(x,y) = \lambda_n^{-1} \eta_{n-1}(\lambda_n\eta_n^{-1}(x)) + O(|b|^{q_{2(n-1)}}).
\end{equation}

By definition, $\eta_n(x) = a_n(x,0)$, where $a_n$ is the first coordinate of $A_n$. Since $A_{n-1}$ and $B_{n-1}$ commute, we have
$$
A_n := \Phi_n^{-1} \circ B_{n-1} \circ A_{n-1}^2 \circ \Phi_n= B_n \circ \Phi_n^{-1} \circ A_{n-1} \circ \Phi_n.
$$
Thus,
\begin{equation}\label{eq:etan}
\eta_n = \xi_n(\lambda_n^{-1}\eta_{n-1}(\lambda_n x)) + O(|b|^{q_{2(n-1)}}).
\end{equation}
Taking the inverse of \eqref{eq:etan} and plugging into \eqref{eq:tildeh}, we obtain
$$
\lambda_n^{-1} \eta_{n-1}(\lambda_n\eta_n^{-1}(x)) = \xi_n^{-1}(x) + O(|b|^{q_{2(n-1)}}).
$$

To derive the first-order approximation of the $y$-dependence of $A_n \circ \Phi_{n+1}$, consider
$$
B_{n+1} = \Phi_{n+1}^{-1} \circ B_n \circ A_n \circ \Phi_{n+1}.
$$
Taking the Jacobian, we obtain
\begin{equation}\label{eq:jaca0}
\Jac_{\bfx_0}(B_{n+1}) = \Jac_{\bfx_2}(\Phi_{n+1}^{-1}) \cdot \Jac_{\bfx_1}(B_n) \cdot \Jac_{\bfx_0} (A_n \circ \Phi_{n+1}),
\end{equation}
where
$$
\bfx_0:= (x,y)
\hspace{5mm} , \hspace{5mm}
\bfx_1 =(x_1, y_1) := A_n \circ \Phi_{n+1}(\bfx_0)
\hspace{5mm} \text{and} \hspace{5mm}
\bfx_2 =(x_2, y_2) := B_n(\bfx_1).
$$
Neglecting higher order terms, we have
$$
x_1 \approx \lambda_{n+1} x
\hspace{5mm} \text{and}\hspace{5mm}
x_2 \approx \xi_n(\lambda_{n+1}x).
$$
By \eqref{eq:jacb}, it follows that
\begin{equation}\label{eq:jaca1}
\Jac_{\bfx_0}(B_{n+1}) \approx b^{q_{2n+2}} \beta(x)
\hspace{5mm} \text{and} \hspace{5mm}
\Jac_{\bfx_1}(B_n) \approx b^{q_{2n}}\beta(\lambda_{n+1} x).
\end{equation}
Observe:
\begin{align}
\Jac_{\bfx_2} \Phi_{n+1}^{-1} &= \lambda_{n+1}^{-2}\begin{vmatrix}
\partial_x a_n(\bfx_2) & \partial_y a_n(\bfx_2)\\
0 & 1
\end{vmatrix}\nonumber\\
&\approx \lambda_{n+1}^{-2}\eta_n'(\xi_n(\lambda_{n+1} x))\label{eq:jaca2}.
\end{align}
Plugging in \eqref{eq:jaca1} and \eqref{eq:jaca2} into \eqref{eq:jaca0}, we obtain
\begin{equation}\label{eq:jaca3}
\Jac_{\bfx_0} (A_n \circ \Phi_{n+1}) \approx b^{q_{2n+1}}\lambda_{n+1}^2\frac{\beta(x)}{\beta(\lambda_{n+1} x)}\eta_n'(\xi_n(\lambda_{n+1} x)).
\end{equation}

Lastly, write:
$$
A_n \circ \Phi_{n+1}(x,y) = \begin{bmatrix}
\lambda_{n+1}x + c_{n+1}\\
\tilde{h}_n(x,0) + E_n(x,y)
\end{bmatrix},
$$
where $E_n(x,y)$ is undetermined. Since
$$
\Jac_{(x,y)} (A_n \circ \Phi_{n+1}) = \lambda_{n+1}\partial_y E_n(x,y),
$$
plugging in \eqref{eq:jaca3} and integrating both sides, we obtain the desired formula.
\end{proof}

Recall that the $n$th cap $(\kappa_n, 0) \in \Omega$ is a dynamically defined point in the renormalization limit set for $\Sigma_n$. It is not difficult to see that we have
$$
\Phi_n((\kappa_n, 0)) = (\kappa_{n-1}, 0)
\hspace{5mm} \text{and} \hspace{5mm}
\Phi_n^{n+k}((\kappa_{n+k}, 0)) = (\kappa_n, 0).
$$
Denote
$$
D_n:= D_{(\kappa_n,0)}\Phi_n
\hspace{5mm} \text{and} \hspace{5mm}
D_n^{n+k} := D_{(\kappa_{n+k},0)}\Phi_n^{n+k}.
$$
Observe
$$
D_n^{n+k} = D_{n+1} \cdot D_{n+2} \cdot \ldots{} \cdot D_{n+k}.
$$

The following estimates on the derivative the microscope maps at the cap is a corollary of \thmref{universalitya}. The statement is more precise than the one given in \cite{Y}, but it follows from the same proof.

\begin{thm}\label{cap derivative}
Write
$$
D_n = \begin{bmatrix}
1 & t_nb^{q_{2(n-1)}} \\
0 & 1
\end{bmatrix}
\begin{bmatrix}
u_n & 0 \\
0 & \lambda_n
\end{bmatrix}.
$$
Then there exist a uniform positive constant $\rho <1$ such that the following estimates hold for all $n \geq 1$:
\begin{enumerate}[(i)]
\item $u_n = \lambda_*^2(1+O(\rho^n))$,
\item $\lambda_n = \lambda_*(1+O(\rho^n))$, and
\item $t_n = \lambda_*\alpha(1)(1+O(\rho^n))$.
\end{enumerate}
Consequently, for $0 \leq n, k$, we have
$$
D_n^{n+k} =  \begin{bmatrix}
1 & t_n^{n+k}b^{q_{2n}}\\
0 & 1
\end{bmatrix}
\begin{bmatrix}
u_n^{n+k} & 0 \\
0 & \lambda_n^{n+k}
\end{bmatrix},
$$
where
\begin{enumerate}[(i)]
\item $u_n^{n+k} := u_{n+1} \cdot u_{n+1} \cdot \ldots \cdot u_{n+k} = \lambda_*^{2k}(1+O(\rho^n))$,
\item $\lambda_n^{n+k} := \lambda_{n+1} \cdot \lambda_{n+2} \cdot \ldots \cdot \lambda_{n+k} = \lambda_*^k(1+O(\rho^n))$, and
\item $t_n^{n+k} = t_{n+1}(1 +O(|b|^{q_{2n+1}}))$.
\end{enumerate}
\end{thm}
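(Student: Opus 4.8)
The plan is to compute $D_n = D_{(\kappa_n,0)}\Phi_n$ by hand and then propagate the resulting estimate through the composition $D_n^{n+k}=D_{n+1}\cdots D_{n+k}$. Since $\Phi_n = H_n\circ\Lambda_n$ with $\Lambda_n$ the affine map of \eqref{eq:affine rescaling}, whose derivative is the constant diagonal matrix with both entries equal to $\lambda_n$, the chain rule gives $D_n = \big(D_{\Lambda_n(\kappa_n,0)}H_n\big)\cdot\operatorname{diag}(\lambda_n,\lambda_n)$. Writing $H_n(x,y)=\big((a_{n-1})_y^{-1}(x),y\big)$ and differentiating the identity $a_{n-1}(w,y)=x$ implicitly (with $w:=(a_{n-1})_y^{-1}(x)$), one finds that $D_pH_n$ is upper triangular at every point $p$, with diagonal entries $1/\partial_x a_{n-1}(w,y)$ and $1$, and upper-right entry $-\partial_y a_{n-1}(w,y)/\partial_x a_{n-1}(w,y)$. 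Because $\Phi_n$ carries the $n$th cap to the $(n-1)$st cap, the relevant base point has $w=\kappa_{n-1}$ and $y=0$, where $\partial_x a_{n-1}(\kappa_{n-1},0)=\eta_{n-1}'(\kappa_{n-1})$ by definition of $\eta_{n-1}$. Multiplying by $\operatorname{diag}(\lambda_n,\lambda_n)$ and factoring the result as a unipotent shear times a diagonal matrix yields exactly the stated decomposition of $D_n$, with
$$
u_n=\frac{\lambda_n}{\eta_{n-1}'(\kappa_{n-1})},\qquad
t_n\,b^{q_{2(n-1)}}=-\frac{\partial_y a_{n-1}(\kappa_{n-1},0)}{\eta_{n-1}'(\kappa_{n-1})},
$$
and lower-right diagonal entry equal to $\lambda_n$.

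Next I would read off the asymptotics (i)--(iii) from results already in hand. By \thmref{universalitya}, $\partial_y a_{n-1}(x,0)=-b^{q_{2(n-1)}}\alpha(x)(1+O(\rho^n))$, so $t_n=\big(\alpha(\kappa_{n-1})/\eta_{n-1}'(\kappa_{n-1})\big)(1+O(\rho^n))$; and by \thmref{convergence} and \propref{2dcap}, $\lambda_n=\lambda_*(1+O(\rho^n))$, $\eta_{n-1}\to\eta_*$ geometrically, and $\kappa_{n-1}\to 1$ geometrically. As $\alpha$ has bounded distortion, these combine to give $u_n=(\lambda_*/\eta_*'(1))(1+O(\rho^n))$ and $t_n=(\alpha(1)/\eta_*'(1))(1+O(\rho^n))$. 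It remains to identify $\eta_*'(1)$: expanding the functional equation $\lambda_*^{-1}\eta_*(\xi_*(\lambda_*x))=\xi_*(x)$ from \eqref{eq:1d limit} in powers of $x$ about $0$, the linear terms vanish because $\xi_*'(0)=0$, and comparing the quadratic terms (valid since $0$ is a nondegenerate critical point of $\xi_*$, using $\eta_*(\xi_*(0))=\lambda_*$ and $\xi_*(0)=1$) gives $\eta_*'(1)\lambda_*=1$, i.e. $\eta_*'(1)=\lambda_*^{-1}$. Hence $u_n=\lambda_*^2(1+O(\rho^n))$ and $t_n=\lambda_*\alpha(1)(1+O(\rho^n))$. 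Note $\alpha(1)\ne 0$ (as $\eta_*'(1),\xi_*'(1)\ne 0$ and $\beta$ is bounded away from $0$), so $t_n$ is bounded away from $0$ and $\infty$; this is used below.

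For the composition, write each $D_{n+j}=S_{n+j}L_{n+j}$ (unipotent shear times diagonal) and push all the diagonal factors to the right, using the rule that moving a diagonal $\operatorname{diag}(u,\lambda)$ rightward past a unipotent shear with off-diagonal entry $s$ replaces $s$ by $su/\lambda$. This collects the diagonal part into $\operatorname{diag}\big(\prod_{j=1}^k u_{n+j},\prod_{j=1}^k\lambda_{n+j}\big)$ and the shear part into a single unipotent matrix with off-diagonal entry $\sum_{j=1}^k t_{n+j}\,b^{q_{2(n+j-1)}}\prod_{i=1}^{j-1}(u_{n+i}/\lambda_{n+i})$. Since $u_{n+i}/\lambda_{n+i}=\lambda_*(1+O(\rho^n))$ and the geometric tail $\sum_{j\ge 1}\rho^{n+j}$ converges uniformly in $k$, one obtains $\prod_{j=1}^k u_{n+j}=\lambda_*^{2k}(1+O(\rho^n))$ and $\prod_{j=1}^k\lambda_{n+j}=\lambda_*^{k}(1+O(\rho^n))$, which are (i) and (ii). For the shear entry, the $j=1$ term is $t_{n+1}b^{q_{2n}}$, while for $j\ge 2$ the Fibonacci recursion gives $q_{2(n+j-1)}-q_{2n}\ge q_{2n+2}-q_{2n}=q_{2n+1}$, so each further term is $O\big(|b|^{q_{2n}}|b|^{q_{2n+1}}|\lambda_*|^{j-1}\big)$; summing the geometric series and dividing out $b^{q_{2n}}$ gives $t_n^{n+k}=t_{n+1}\big(1+O(|b|^{q_{2n+1}})\big)$, the factorization using that $t_{n+1}$ is bounded away from $0$.

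I expect the main obstacle to be this last bookkeeping step: keeping every $O(\rho^n)$ and $O(|b|^{q_{2n}})$ error uniform in both $n$ and $k$ while reorganizing $D_{n+1}\cdots D_{n+k}$, and in particular verifying that the conjugated shear entries decay geometrically in $j$ --- which is precisely why $u_{n+i}/\lambda_{n+i}\approx\lambda_*$ with $|\lambda_*|<1$ is essential --- so that their sum over $j$ telescopes to the leading term $t_{n+1}$ with an error of order $|b|^{q_{2n+1}}$ rather than merely $|b|^{q_{2n}}$. The other slightly delicate point is the identification $\eta_*'(1)=\lambda_*^{-1}$, which upgrades ``$u_n$ converges to a universal number'' to the sharp value $\lambda_*^2$; it is routine once one uses the nondegeneracy of the critical point of $\xi_*$ in the functional equation.
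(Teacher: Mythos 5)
Your derivation is correct and follows the route the paper itself indicates: \thmref{cap derivative} is presented as a corollary of \thmref{universalitya} (with details deferred to \cite{Y}), and your chain-rule computation of $D_{(\kappa_n,0)}\Phi_n = D(H_n)\cdot\operatorname{diag}(\lambda_n,\lambda_n)$ using $\partial_y a_{n-1}(x,0)=-b^{q_{2(n-1)}}\alpha(x)(1+O(\rho^n))$, followed by the shear/diagonal bookkeeping for $D_{n+1}\cdots D_{n+k}$, is exactly that argument. The only step worth flagging is the identification $\eta_*'(1)=\lambda_*^{-1}$ (which pins down $u_n\to\lambda_*^2$ and $t_n\to\lambda_*\alpha(1)$): it rests on the nondegeneracy of the critical point of $\xi_*$ at $0$, a standard property of the one-dimensional fixed point of \cite{GaYam} that you correctly invoke but which is not explicitly stated in this paper.
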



\section{Proof of Non-Quasisymmetry}

\begin{defn}
Let $\gamma$ be a continuous arc or a simple closed curve. For $x, y \in \gamma$, let $[x, y]_\gamma$ denote the smallest subarc of $\gamma$ with endpoints at $x$ and $y$. We say that $\gamma$ is $K$-{\it quasisymmetric} for some $K>0$ if
$$
\diam([x,y]_\gamma) < K \dist(x,y)
\hspace{5mm} \text{for all} \hspace{5mm}
x,y \in \gamma.
$$
If $\gamma$ is not $K$-quasisymmetric for all $K > 0$, then we say that $\gamma$ has {\it unbounded geometry}.
\end{defn}

\begin{proof}[Proof of the Main Theorem]
Let  $n, k \in \bbN$ be sufficiently large. To prove the desired result, we analyze the geometry of the Siegel boundary $\partial \cD_b$ in three different scales: that of $\Sigma_{n+k}$, $\Sigma_n$ and $\Sigma_0$.

We start in the scale of $\Sigma_{n+k}$. Consider the dynamically defined points
$$
\bfq_{n+k} := (\kappa_{n+k}, 0)
\hspace{5mm} \text{and} \hspace{5mm}
\bfp_{n+k} := A_{n+k}(\bfq_{n+k}).
$$
Next, in the scale of $\Sigma_n = (A_n, B_n)$, consider
$$
\bfq_n^{n+k} := A_n\circ \Phi_n^{n+k}(\bfq_{n+k}) = A_n((\kappa_n, 0))
\hspace{5mm} \text{and} \hspace{5mm}
\bfp_n^{n+k} := A_n \circ \Phi_n^{n+k}(\bfp_{n+k}).
$$
Lastly, in the scale of $\Sigma_0$, consider
$$
\bfq_0^{n+k} := \Phi_0^n(\bfq_n^{n+k})
\hspace{5mm} \text{and} \hspace{5mm}
\bfp_0^{n+k} := \Phi_0^n(\bfp_n^{n+k}).
$$
See \figref{fig:cusp}.

It is easy to see that
$$
\bfq_0^{n+k} = H_b^i((\kappa_0, 0)) \in \partial \cD_b
\hspace{5mm} \text{and} \hspace{5mm}
\bfp_0^{n+k} = H_b^j((\kappa_0, 0)) \in \partial \cD_b.
$$
for some $i, j \in \bbN$. Denote the $x$- and $y$-coordinate of $\bfq_{n+k} - \bfp_{n+k}$ by $\Delta_{n+k} x$ and $\Delta_{n+k} y$ respectively. The $x$- and $y$-coordinates of $\bfq_n^{n+k}-\bfq_n^{n+k}$ and $\bfq_0^{n+k} - \bfp_0^{n+k}$ are denoted similarly, but with $\Delta_n^{n+k}$ and $\Delta_0^{n+k}$ respectively instead of $\Delta_{n+k}$.

\begin{figure}[h]
\centering
\includegraphics[scale=0.25]{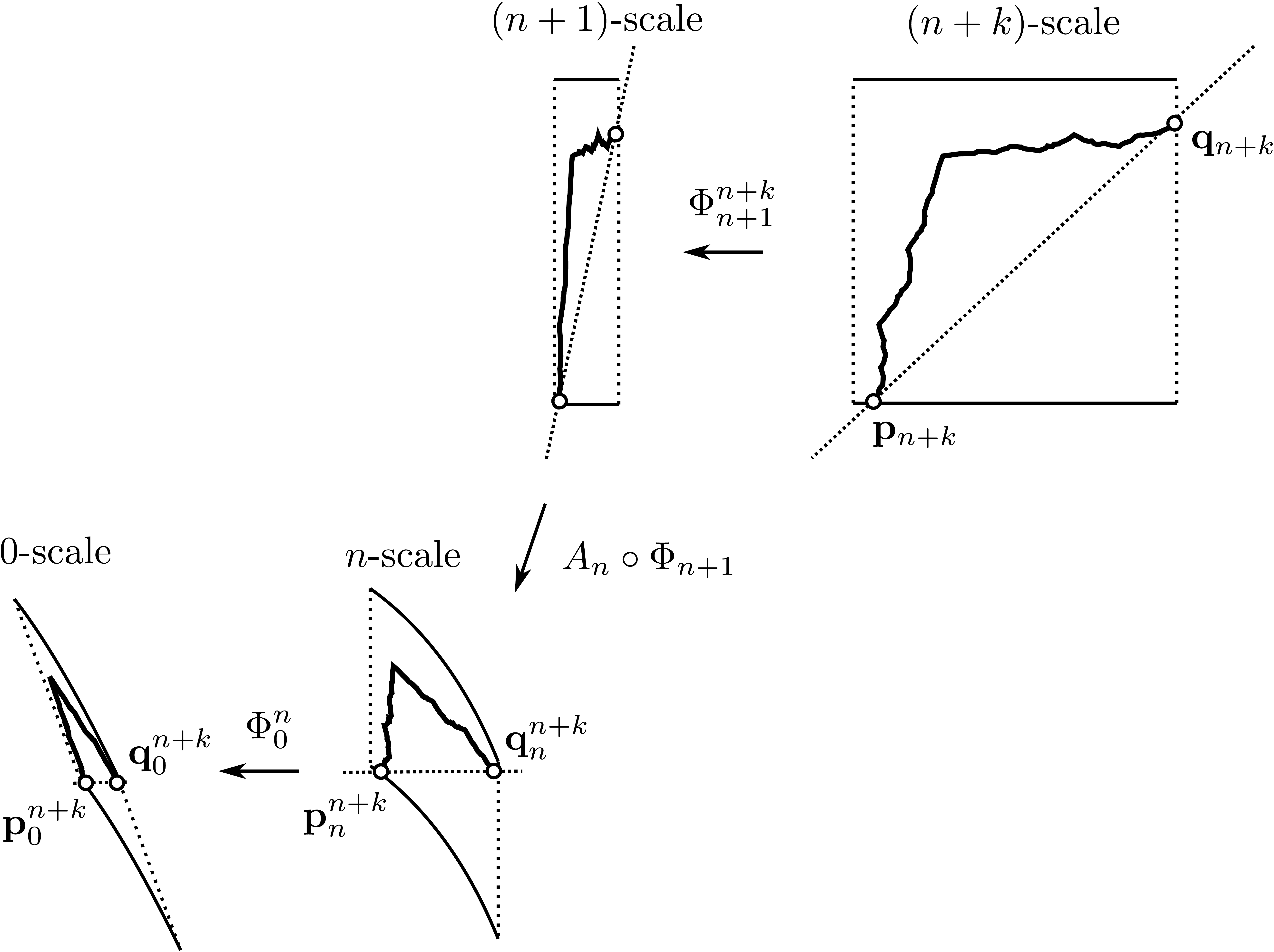}
\caption{Exploiting universality to create a cusp in the Siegel boundary $\partial \cD_b$.}
\label{fig:cusp}
\end{figure}

First, note that $\Delta_{n+k}\, x$ and $\Delta_{n+k}\, y$ converge to some uniform limits $\Delta_*\, x$ and $\Delta_*\, y$ respectively as $k \to \infty$. By \thmref{universalityaphi} and \ref{cap derivative}, we have:
\begin{equation}\label{eq:deltax}
\Delta_n^{n+k}\, x = \lambda_*^{2k-1}\Delta_*\, x(1+O(\rho^n))
\end{equation}
and
\begin{equation}\label{eq:deltay}
\Delta_n^{n+k}\, y =\lambda_*^{k-1}(\lambda_*^kC_1 - b^{q_{2n+1}}C_2)(1+O(\rho^n)),
\end{equation}
where
$$
C_1 := (\xi_*^{-1})'(\lambda_*)\Delta_*\, x
\hspace{5mm} \text{and} \hspace{5mm}
C_2 := \lambda_*\chi(1)\Delta_*\, y
$$
are uniform constants.

The values of $b$ that solve
$$
b^{q_{2n+1}} = \frac{C_1}{C_2}\lambda_*^k
$$
as $k$ and $n$ run through $\mathbb{N}$ is dense in $\mathbb{D}_{\bar\epsilon}$. Let $\mathbb{B}_m \subset \bbD_{\bar\epsilon}$ be the set of parameter values such that for some $n \geq m$ and $k \in \mathbb{N}$, we have
$$
|b|^{q_{2n+1}} \asymp |\lambda_*|^k
$$
and
\begin{equation}\label{eq:vertical0}
\Delta_n^{n+k}\, y = O(|\lambda_*|^{n+2k}).
\end{equation}
Then $\mathbb{B}_m$ is a dense open set of $\mathbb{D}_{\bar\epsilon}$. Hence, the intersection
$$
\mathbb{B}_\infty := \bigcap_{m = N}^\infty \mathbb{B}_m
$$
is a $G_\delta$-subset of $\mathbb{D}_{\bar\epsilon}$, consisting of parameters $b$ for which \eqref{eq:vertical0} holds for infinitely many $n \in \bbN$.

Assume that $b \in \mathbb{B}_\infty$ and that \eqref{eq:vertical0} holds for $n$ and $k$. By \thmref{cap derivative}, we have:
$$
\Delta_0^{n+k}\, x \asymp \lambda_*^{2n}\Delta_n^{n+k}\, x \asymp \lambda_*^{2n+2k-1}
$$
and
$$
\Delta_0^{n+k}\, y \asymp \lambda_*^n\Delta_n^{n+k}\, y = O(|\lambda_*|^{2n+2k})
$$
Hence:
\begin{equation}\label{eq:total dist}
\dist(\bfq_0^{n+k}, \bfp_0^{n+k}) = O(|\lambda_*|^{2n+2k}).
\end{equation}

We now estimate the diameter of the subarc $[\bfq_0^{n+k}, \bfp_0^{n+k}]_{\partial \cD_b}$. Consider the points
$$
\check{\bfq}_{n+k} := \Phi_{n+k+1}\circ A_{n+k+1}((\kappa_{n+k+1}, 0))
\hspace{5mm} , \hspace{5mm}
\check{\bfq}_n^{n+k} := A_n \circ \Phi_n^{n+k}(\check{\bfq}_{n+k})
$$
$$
\hspace{5mm} \text{and} \hspace{5mm}
\check{\bfq}_0^{n+k} := \Phi_0^n(\check{\bfq}_n^{n+k}).
$$
It is easy to see that $\check{\bfq}_0^{n+k} \in [\bfq_0^{n+k}, \bfp_0^{n+k}]_{\partial \cD_b}$. Denote the $x$- and $y$-coordinate of $\check{\bfq}_{n+k} - \bfq_{n+k}$ by $\check{\Delta}_{n+k}\, x$ and $\check{\Delta}_{n+k}\, y$ respectively. The $x$- and $y$-coordinates of $\check{\bfq}_n^{n+k}-\bfq_n^{n+k}$ and $\check{\bfq}_0^{n+k} - \bfq_0^{n+k}$ are denoted similarly, but with $\check{\Delta}_n^{n+k}$ and $\check{\Delta}_0^{n+k}$ respectively instead of $\Delta_{n+k}$.

Note that $\check{\Delta}_{n+k}\, x$ and $\check{\Delta}_{n+k}\, y$ converge to $\lambda_*^2 \Delta_*\, x$ and $\lambda_* \Delta_*\, y$ respectively as $k \to \infty$. By similar considerations as for \eqref{eq:deltax} and \eqref{eq:deltay}, we obtain
$$
\check{\Delta}_n^{n+k}\, x = \lambda_*^{2k+1}\Delta_*\, x(1+O(\rho^n))
$$
and
\begin{align*}
\check{\Delta}_n^{n+k}\, y &=\lambda_*^k(\lambda_*^{k+1}C_1 - b^{q_{2n+1}}C_2)(1+O(\rho^n))\\
&= \lambda_*^{2k}C_1\left(\lambda_* - \frac{b^{q_{2n+1}}C_2}{\lambda_*^kC_1}\right)(1+O(\rho^n)).
\end{align*}
Since $\lambda_* <1$, it follows from \eqref{eq:vertical0} that $\check{\Delta}_n^{n+k}\, y \asymp \lambda_*^{2k}$. Hence,
$$
\check{\Delta}_0^{n+k}\, y \asymp \lambda_*^n \check{\Delta}_0^{n+k} \asymp \lambda_*^{n+2k}.
$$
Therefore,
$$
\frac{\diam([\bfq_0^{n+k}, \bfp_0^{n+k}]_{\partial \cD_b})}{\dist(\bfq_0^{n+k}, \bfp_0^{n+k})} \geq \frac{\dist(\check{\bfq}_0^{n+k}, \bfq_0^{n+k})}{\dist(\bfq_0^{n+k}, \bfp_0^{n+k})} \asymp \lambda_*^{-n} \to \infty
\hspace{5mm} \text{as} \hspace{5mm}
n \to \infty.
$$
The result follows.
\end{proof}

\end{document}